\newtheorem{theorem}{Theorem}[section]
\newtheorem{corollary}[theorem]{Corollary}
\newtheorem{lemma}[theorem]{Lemma}
\newtheorem{proposition}[theorem]{Proposition}
\theoremstyle{remark}\newtheorem{remark}[theorem]{Remark}}
\theoremstyle{example}\newtheorem{example}[theorem]{Example}}
\begin{document}

\renewcommand{\theequation}{\arabic{equation}}
\renewcommand{\thefigure}{\arabic{figure}}
\renewcommand{\thesection}{\normalsize\arabic{section}}
%da nakon numeracije sectiona ide tocka i da je numeracija iste velicine kao i tekst clanka
\newcommand{\ssection}[1]{%
  \section[#1]{\small\scshape #1}}

%definiranje novog sectiona koji je centriran, velika slova, analogno subsection
%\newcommand{\ssection}[1]{%
 % \section[#1]{\centering\normalfont\scshape #1}}
%\newcommand{\ssubsection}[1]{%
%  \subsection[#1]{\raggedright\normalfont\itshape #1}}

\title{\normalsize{\textbf{Number of Polynomials Vanishing on a Basis of $S_m(\Gamma_0(N))$}}}
%definiranje naslova koji je podebljan i velicine teksta clanka (inace je veci)
\author{\normalsize{\textit{Iva Kodrnja}}\\ \normalsize{\textit{Faculty of Geodesy}} \and \normalsize{\textit{Helena Koncul}} \\ \normalsize{\textit{Faculty of Civil Engineering}}}
%definiranje autora ako ih ima vise odvajaju se s \and, te se moze mjenjati njihov oblik npr. ovdje je nakosen(italic) i velicine normalne velicine
\date{}
%ako se ne zeli datum ostaviti ovako
%ako se datum komentira onda automatski pise datum kompajliranja file-a

\flushbottom
\addtolength{\voffset}{-0.5cm}
%\renewcommand{\baselinestretch}{2}
% dvostruki prored
%
\captiondelim{. }
\captionnamefont{\normalfont\footnotesize\bfseries}
\captiontitlefont{\normalfont\footnotesize}

%%%%%%%%%%%%%%%%%%%%%%%%%%%%%%%%%%%%%%%%%%%%%%%%%%%%%%%%%%%%%%%%%%%%%%%%%%%%%%%%
%%%%%%%%%%%%%%%%%%%%%%%%%%%%%%%%%%%%%%%%%%%%%%%%%%%%%%%%%%%%%%%%%%%%%%%%%%%%%%%%

%

%\begin{document}
%

%%%%%%%%%%%%%%%%%%%%%%%%%%%%%%%%%%%%%%%%%%%%%
% Sadrzaj
%
%\newpage
%\pagenumbering{roman} \setcounter{page}{1}
%
%%%%%%%%%%%%%%%%%%%%%%%%%%%%%%%%%%%%%%%%%%%%%
%%%%%%%%%%%%%%%%%%%%%%%%%%%%%%%%%%%%%%%%%%%%%

\pagenumbering{arabic} \setcounter{page}{1}

%%%%%%%%%%%%%%%%%%%%%%%%%%%%%%%%%%%%%%%%%%%
\maketitle
%pozivanje naslova

%ili kao ovo dolje improvizacija pocetka file-a
%\begin{center}
%\textbf{Number of Polynomials Vanishing on a Basis of $S_m(\Gamma_0(N))$}
%\end{center}

%\vspace{0.5cm}

%\begin{center}
%\textit{Iva Kodrnja, Helena Koncul\\iva.kodrnja@geof.unizg.hr,helena.koncul@grad.unizg.hr\\Faculty of Geodesy, Faculty of Civil Engineering\\University of Zagreb  }
%\end{center}

\vspace{0.5cm}

\begin{footnotesize}
\textit{Abstract}. In this paper we find the number of homogeneous polynomials of degree $d$ such that they vanish on cuspidal modular forms of even weight $m\geq 2$ that form a basis for $S_m(\Gamma_0(N))$. We use these cuspidal forms to embedd $X_0(N)$ to projective space and we find the Hilbert polynomial of the graded ideal of the projective curve that is the image of this embedding.
\end{footnotesize}

\begin{footnotesize}\textit{modular forms,  modular curves, projective curves, Hilbert polynomial}\end{footnotesize}

\begin{footnotesize}MSC2020: 11F11, 05E40, 13F20\end{footnotesize}

\maketitle

\section{Introduction}
\label{intro}

Let $N>1$, $m\geq 2$ an even number and let $f_0,...,f_{t-1}$ be elements of the basis of the space of cuspidal modular forms $S_m(\Gamma_0(N))$ of weight $m$ with $\dim S_m(\Gamma_0(N))=t$. Let $X_0(N)$ be the modular curve for $\Gamma_0(N)$. As in \cite{Muic1} we look at the holomorphic map $X_0(N)\to \mathbb{P}^{t-1}$ defined by
\begin{equation}\label{map}
\mathfrak{a}_z\mapsto (f_0(z):\cdots:f_{t-1}(z))
\end{equation} 

and we denote the image curve of this map by $$\mathcal{C}(N,m)\subseteq \mathbb{P}^{t-1}.$$ 

Let us set $g=f_{t-1}$. Then the map $(\ref{map})$ can be written as 
\begin{equation}\label{map1}
\mathfrak{a}_z\mapsto (f_0(z)/g(z):\cdots :f_{t-1}(z)/g(z))
\end{equation} 
and the map is a rational map of algebraic curves. 

We gather the following facts about this image curve $\mathcal{C}(N,m)$:

\begin{lemma}\label{l1}
Assume that $m \geq 2 $ even. Let $t=\dim S_m(\Gamma_0(N))$, $f_0,\dots,f_{t-1}$ be
a basis of $S_m(\Gamma_0(N))$, $g$ be the genus of $\Gamma_0(N)$ and we denote by
 $$\mathcal{C}(N,m) = C(f_0,\cdots, f_{t-1})$$
the image of the map $\ref{map}$. Then we have the following
\begin{itemize}
	\item[i)]   $\mathcal{C}(N,m)$ is an irreducible smooth projective curve in $\mathbb{P}^{t-1}$.
	\item[ii)] If $m\geq 4$, then we always have $t \geq g + 2$ and the degree of the curve is $t + g - 1$.
	\item[iii)] If  $m=2$ then $t=g$ and if $X_0(N)$ is not hyperelliptic then the map $(\ref{map})$ is the canonical embedding and the degree of the curve is $2g-2$.
	\item[iv)] If  $m=2$ then $t=g$ and if $X_0(N)$ is hyperelliptic, then the canonical map $(\ref{map})$  is of degree 2 and the degree of the curve is $g-1$.
\end{itemize}.

\end{lemma}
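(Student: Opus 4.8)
The plan is to reinterpret the map (\ref{map}) through a line bundle on $X_0(N)$ and then apply Riemann--Roch together with the classical very-ampleness and canonical-embedding theorems. Write $X=X_0(N)$, and let $\epsilon_2,\epsilon_3$ be the numbers of elliptic points of orders $2,3$ and $\epsilon_\infty$ the number of cusps. For even $m$ there is a line bundle $L_m$ on $X$ together with a natural isomorphism $S_m(\Gamma_0(N))\cong H^0(X,L_m)$, obtained by sending $f$ to the meromorphic $\tfrac m2$-differential $f(z)\,(dz)^{m/2}$ and tracking its orders at the cusps and elliptic points. The local analysis at those points is what yields the degree
\begin{equation*}
\deg L_m=\tfrac m2(2g-2)+\Big\lfloor\tfrac m4\Big\rfloor\epsilon_2+\Big\lfloor\tfrac m3\Big\rfloor\epsilon_3+\Big(\tfrac m2-1\Big)\epsilon_\infty,
\end{equation*}
and, specialised to $m=2$, the identification $L_2=\Omega^1_X$. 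Since $f_0,\dots,f_{t-1}$ is a basis, (\ref{map}) is exactly the morphism attached to the complete linear system $|L_m|$; for $m=2$ this gives $t=\dim H^0(X,\Omega^1_X)=g$ and exhibits (\ref{map}) as the canonical map, which already supplies the equalities $t=g$ in (iii)--(iv).

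For (ii), assume $m\geq 4$. First I would check that $\deg L_m>2g-2$, so that Serre duality forces $H^1(X,L_m)=0$; Riemann--Roch then gives $t=\deg L_m-g+1$, i.e. $\deg L_m=t+g-1$. Next I would verify the sharper bound $\deg L_m\geq 2g+1$, which makes $L_m$ very ample. Then $|L_m|$ embeds $X$ into $\mathbb{P}^{t-1}$, so $\mathcal{C}(N,m)$ is a smooth irreducible projective curve (this is (i) in this range), and the degree of an embedded curve equals the degree of the embedding bundle, namely $\deg L_m=t+g-1$. Finally $t=\deg L_m-g+1\geq (2g+1)-g+1=g+2$.

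For (iii)--(iv) the map is the canonical map $X\to\mathbb{P}^{g-1}$. If $X$ is not hyperelliptic I would invoke the canonical-embedding theorem (Max Noether): the canonical system is very ample, so (\ref{map}) is an embedding and the image has degree $\deg\Omega^1_X=2g-2$, giving (iii). If $X$ is hyperelliptic, let $\pi\colon X\to\mathbb{P}^1$ be the degree-$2$ map; every holomorphic differential is a pull-back along $\pi$, so the canonical map factors as $X\xrightarrow{\pi}\mathbb{P}^1\hookrightarrow\mathbb{P}^{g-1}$, the second arrow being the degree-$(g-1)$ rational normal curve. Hence (\ref{map}) is $2$-to-$1$ onto its image, the image is a rational normal curve (in particular smooth, completing (i) here), and its degree is $g-1$, giving (iv).

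The main obstacle is the degree bookkeeping for $L_m$ together with the very-ampleness step. Pinning down the exact contributions of the cusps and of the order-$2$ and order-$3$ elliptic points to $\deg L_m$, and hence the clean identity $\deg L_m=t+g-1$, requires the careful local study of $f(z)\,(dz)^{m/2}$ at those special points; this is the technical heart of (ii). Establishing $\deg L_m\geq 2g+1$ is then immediate for $g\geq 2$, where the term $\tfrac m2(2g-2)$ already dominates, but the small-genus modular curves must be examined directly: there $L_m$ can fail to be very ample and (\ref{map}) can drop to a multiple cover, so I expect these finitely many exceptional levels to be the one place where the uniform statement needs separate verification.
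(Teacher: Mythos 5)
Your write-up is not really a different route from the paper's: the paper's entire proof of Lemma \ref{l1} consists of citations (Chow's theorem for i), Mui\'c's Corollary 3-4a for ii), Miranda, Chapter 7, for iii) and iv)), and what you have done is reconstruct the machinery inside those citations --- the identification $S_m(\Gamma_0(N))\cong H^0(X,L_m)$ with the degree formula you state, Riemann--Roch with Serre duality, the very-ampleness criterion $\deg L_m\geq 2g+1$, Max Noether's theorem, and the factorization of the hyperelliptic canonical map through the rational normal curve. Where your argument is complete, it is correct, and it is more detailed than what the paper offers.

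However, the step you explicitly leave open --- verifying $\deg L_m\geq 2g+1$ for all even $m\geq 4$, including small genus --- is a genuine gap, and in fact it cannot be closed, because part ii) of the lemma is false as stated. Take $(N,m)=(11,4)$, a case listed in the paper's own Section 3 under $t=2$, genus $1$: $X_0(11)$ has genus $1$, two cusps, and no elliptic points, so your formula gives $\deg L_4=\left(\tfrac{4}{2}-1\right)\cdot 2=2>2g-2$, hence Riemann--Roch applies and $t=\deg L_4-g+1=2$. Thus $t=g+1$, contradicting the claim $t\geq g+2$; moreover, a degree-$2$ line bundle on a genus-$1$ curve is base-point free but not very ample, so the map $(\ref{map})$ is a degree-$2$ cover of $\mathbb{P}^1$ and the image curve has degree $1$, not $t+g-1=2$. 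Your own dichotomy locates the failure exactly: for $g\geq 2$ one has $\deg L_m\geq 4(g-1)+2\geq 2g+2$ (using that $X_0(N)$, $N>1$, has at least two cusps), and for $g=0$ degree $t-1\geq 1$ suffices, but for $g=1$ very ampleness needs $\deg L_m=t\geq 3$, which fails precisely at $(11,4)$. The cure is an added hypothesis --- $t\geq 3$, or equivalently $t\geq g+2$ --- under which your argument goes through verbatim; such a hypothesis is imposed in the paper's main theorems anyway, and is presumably present in Mui\'c's original corollary but was lost in the paper's restatement, so the defect here lies in the statement rather than in your method.
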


\begin{proof}
The i) part of Lemma \ref{l1} follows from Chow's theorem, ii) and its proof can be found in \cite{Muic1}, Corollary 3-4a and iii) and iv) for canonical map see \cite{Miranda}, Chapter 7.
\end{proof}

 Planar curves, for instance planar models of modular curves $X_0(N)$ are defined by one irreducible equation, defining equation for the curve, (see \cite{Anni}, \cite{Muic1}, \cite{Muic2}, \cite{MuKo}, \cite{Gal}). The degree of this minimal polynomial  is the degree of the curve. But when we look at maps to higher dimensional projective spaces as in our case ($\ref{map}$), then the image curve is a projective curve, a one-dimensional variety in $\mathbb{P}^{t-1}$, for $t\geq 3$. In this higher-dimensional space curves are no longer given by one equation, since hypersurfaces are of codimension $1$, yet algebraic sets whose ideal is finitely-generated.

We can compare our situation to results from \cite{Gal} where the canonical map, given in terms of cusp forms of weight $2$ for $\Gamma_0(N)$, is used to obtain models for modular curves. Then the map is to projective space $\mathbb{P}^g$, where $g$ is the genus of the group $\Gamma_0(N)$, $g=\dim S_2(\Gamma_0(N))$, and the model for modular curve is the projective curve defined as intersection of finite number of hypersurfaces, given by system polynomial equations. The study of such projective curves are in domain of intersection theory (\cite{hartshorne}, \cite{Sha}) and ideals of commutative graded rings (\cite{cm_rings}, \cite{cocoa2}).

Let $\mathcal{P}=\mathbb{Q}\left[X_0,\dots,X_ {t-1}\right] $ be the ring of polynomials in $t$ variables and $\mathcal{P}_d=\mathbb{Q}\left[X_0,\dots,X_ {t-1}\right]_d$ subring of homogeneous polynomials of degree $d$. We regard $\mathcal{P}$ as the graded ring $\mathcal{P}=\bigoplus_{d\geq 0} \mathcal{P}_d$. 
%$$
%\mathbb{C}\left[X_0,\dots,X_ {t-1}\right]=\bigoplus_{d\geq 0} \mathbb{C}\left[X_0,\dots,X_ {t-1}\right]_d
%$$
%where $\mathcal{P}_d=\mathbb{C}\left[X_0,\dots,X_ {t-1}\right]_d$ consists of homogeneous polynomials of degree $d$.
% these are polynomials with all monomials of the same degree $d$ and this is the set we will denote as $$\mathcal{P}_d=\mathbb{C}\left[X_0,\dots,X_ {t-1}\right]_d.$$

Let $I(\mathcal{C}(N,m))\subseteq \mathcal{P}$ be the ideal of the curve $\mathcal{C}(N,m)$, radical homogenous ideal consisting of all homogenous polynomials that vanish on $\mathcal{C}(N,m)$. Then  $f\in I(\mathcal{C}(N,m))$ defines a hypersurface $\mathcal{C}(N,m)\subset V(f)$ in $\mathbb{P}^{t-1}$. 
We also have a graded structure on the ideal $I(\mathcal{C}(N,m))$, if we set
\begin{equation}
I(\mathcal{C}(N,m))_d= \mathcal{P}_d \cap I(\mathcal{C}(N,m))
\end{equation}
we get the vector space $I(\mathcal{C}(N,m))_d$ of all homogenous polynomials of degree $d$ which vanish on $\mathcal{C}(N,m)$.

\begin{lemma}
	$$I(\mathcal{C}(N,m))=\bigoplus_{d\geq 0} I(\mathcal{C}(N,m))_d$$	
\end{lemma}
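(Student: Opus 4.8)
The plan is to show that $I(\mathcal{C}(N,m))$ is a \emph{homogeneous} (graded) ideal, which is exactly the content of the asserted equality: an ideal $I \subseteq \mathcal{P}$ coincides with $\bigoplus_{d \geq 0}(I \cap \mathcal{P}_d)$ precisely when, for every $f \in I$, each homogeneous component of $f$ again lies in $I$. I would therefore reduce the statement to proving this closure-under-taking-homogeneous-components property.

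One inclusion is immediate. Since each piece $I(\mathcal{C}(N,m))_d = \mathcal{P}_d \cap I(\mathcal{C}(N,m))$ is contained in $I(\mathcal{C}(N,m))$, and the latter is closed under addition, every finite sum of elements drawn from distinct graded pieces lies in the ideal; this gives $\bigoplus_{d \geq 0} I(\mathcal{C}(N,m))_d \subseteq I(\mathcal{C}(N,m))$, where the direct sum is read inside the decomposition $\mathcal{P} = \bigoplus_d \mathcal{P}_d$.

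For the reverse inclusion I would exploit the fact that $\mathcal{C}(N,m)$ lives in projective space, so its defining condition is invariant under rescaling of homogeneous coordinates. Concretely, take $f \in I(\mathcal{C}(N,m))$ and write $f = \sum_d f_d$ with $f_d \in \mathcal{P}_d$. Fix any representative $a = (a_0, \ldots, a_{t-1})$ of a point of $\mathcal{C}(N,m)$. Because $\lambda a$ represents the same projective point for every scalar $\lambda \neq 0$, the polynomial $f$ vanishes at every $\lambda a$, and homogeneity of the $f_d$ gives
$$0 = f(\lambda a) = \sum_d \lambda^d f_d(a) \quad \text{for all } \lambda \neq 0.$$
Viewing the right-hand side as a one-variable polynomial in $\lambda$ with infinitely many roots (the ground field is infinite), all of its coefficients must vanish, so $f_d(a) = 0$ for every $d$. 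As $a$ ranges over all representatives of all points of the curve, this shows each $f_d \in I(\mathcal{C}(N,m))$, hence $f_d \in I(\mathcal{C}(N,m))_d$ and $f \in \bigoplus_d I(\mathcal{C}(N,m))_d$.

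The two inclusions together give the claim. The only point that needs care — and the step I would flag as the crux — is the passage from ``$f$ vanishes on $\mathcal{C}(N,m)$'' to ``$f$ vanishes on the whole affine cone over $\mathcal{C}(N,m)$'', i.e. that vanishing of $f$ at a projective point really means vanishing at every scalar multiple of a representative. This is what licenses the Vandermonde-type argument in $\lambda$, and it is automatic here since the coordinates of points of $\mathcal{C}(N,m)$ are only defined up to a common scalar by the map $(\ref{map})$; everything else is routine.
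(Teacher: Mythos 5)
Your proof is correct, but it does genuinely more work than the paper's own argument, and the two take different routes. The paper defines $I(\mathcal{C}(N,m))$ from the outset as the homogeneous ideal built out of the homogeneous polynomials vanishing on the curve, so its proof of the lemma is little more than a record of the closure properties of the grading: homogeneous times homogeneous is homogeneous, together with the module-type inclusion $\mathcal{P}_j I(\mathcal{C}(N,m))_d \subseteq I(\mathcal{C}(N,m))_{j+d}$ (stated in the paper with a typographical slip as subscript $jd$). Under that definition the direct-sum decomposition is essentially immediate. You instead prove the substantive, textbook-standard fact that the full vanishing ideal --- all polynomials, homogeneous or not, vanishing on the curve --- is a homogeneous ideal: you decompose an arbitrary $f\in I$ into graded pieces $f=\sum_d f_d$, use that vanishing at a projective point means vanishing at every scalar representative, and then kill each $f_d$ by the scaling argument $0=f(\lambda a)=\sum_d \lambda^d f_d(a)$ for infinitely many $\lambda$. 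What your route buys is the stronger statement that the a priori larger ideal of \emph{all} polynomials vanishing on the affine cone over $\mathcal{C}(N,m)$ coincides with $\bigoplus_d I(\mathcal{C}(N,m))_d$; this is exactly what justifies calling this graded object ``the ideal of the curve,'' a point the paper's terse proof leaves implicit. What the paper's route buys is brevity: given its definition, nothing beyond bookkeeping is needed. Your flagged crux --- that vanishing at a projective point licenses evaluation at every representative --- is indeed the only delicate step, and you resolve it correctly; the only cosmetic caveat is that evaluation takes place at complex points (values of cusp forms), so the ``infinitely many roots'' argument runs over $\mathbb{C}$, which is of course fine.
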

\begin{proof} 
	
		A homogeneous polynomial of degree $d$ only has monomials of degree $d$. Product of two homogeneous polynomials of degrees $d_1$ and $d_2$ is again a homogeneous polynomial of degree $d_1+d_2$. 
		We can see this graded structure as vector spaces or modules, because we have
		$$\mathcal{P}_jI(\mathcal{C}(N,m))_d\subseteq I(\mathcal{C}(N,m))_{jd}$$
\end{proof}

Our goal is to compute all linearly independent homogeneous polynomials of certain given degree that vanish on the curve $\mathcal{C}(N,m)$ and the dimension of the vector space $I(\mathcal{C}(N,m))_d$. In Section \ref{sec_poly} we present the algorithm to compute homogeneous polynomials vanishing on $\mathcal{C}(N,m)$, in Section \ref{results} we present the results of computation, and in Section \ref{sec_formula} we find the formula for the number of polynomials, which we present here in the following two theorems. 

\begin{theorem}\label{T1}
	Let $t=\dim S_m(\Gamma_0(N))\geq 3$, $g$ be genus of $\Gamma_0(N)$.
	Let $N\geq 2$ be such that $g=0$ or $g=t$ and $X_0(N)$ is hyperelliptic. 
Then the number $N(t,d)$ of homogeneous polynomials of degree $d\geq 0$  is equal to
\begin{equation}\label{formula1}
N(t,d)=\binom{t+d-1}{d}-(t-1)d-1.
\end{equation}
\end{theorem}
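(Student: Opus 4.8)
The plan is to compute $N(t,d) = \dim_{\mathbb{Q}} I(\mathcal{C}(N,m))_d$ by comparing the full space of degree-$d$ forms with the homogeneous coordinate ring of the curve. Since $\dim_{\mathbb{Q}} \mathcal{P}_d = \binom{t+d-1}{d}$ is the number of degree-$d$ monomials in the $t$ variables $X_0,\ldots,X_{t-1}$, the exact sequence $0 \to I(\mathcal{C}(N,m))_d \to \mathcal{P}_d \to (\mathcal{P}/I(\mathcal{C}(N,m)))_d \to 0$ gives
\begin{equation}
N(t,d) = \binom{t+d-1}{d} - H_{\mathcal{C}}(d), \qquad H_{\mathcal{C}}(d) := \dim_{\mathbb{Q}} \bigl(\mathcal{P}/I(\mathcal{C}(N,m))\bigr)_d .
\end{equation}
Thus everything reduces to showing that the Hilbert function satisfies $H_{\mathcal{C}}(d) = (t-1)d + 1$ for every $d \geq 0$.

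The key geometric observation is that in both cases $\mathcal{C}(N,m)$ is a rational normal curve of degree $t-1$ in $\mathbb{P}^{t-1}$. A nondegenerate irreducible curve in $\mathbb{P}^{t-1}$ has degree at least $t-1$, with equality precisely for the rational normal curves, so it suffices to check nondegeneracy and degree $t-1$. Nondegeneracy is immediate from the linear independence of $f_0,\ldots,f_{t-1}$, since then no nonzero linear form vanishes on the image. For $g=0$ we have $m \geq 4$, and Lemma \ref{l1} ii) gives degree $t+g-1 = t-1$. In the hyperelliptic case $m=2$ and $t=g$, and Lemma \ref{l1} iv) gives degree $g-1 = t-1$, the image being the rational normal curve onto which the canonical map projects $2$-to-$1$.

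With this identification I would compute $H_{\mathcal{C}}(d)$ from the standard parametrization $[s^{t-1} : s^{t-2}t : \cdots : t^{t-1}]$. Pulling back $X_i \mapsto s^{\,t-1-i} t^{\,i}$ identifies $\bigl(\mathcal{P}/I(\mathcal{C}(N,m))\bigr)_d$ with the span of the monomials $s^a t^b$ with $a+b = (t-1)d$, of which there are exactly $(t-1)d+1$; equivalently, the rational normal curve is projectively normal and has geometric genus $0$, so its Hilbert function coincides with its Hilbert polynomial $(t-1)d+1$ for all $d \geq 0$. Note that in the hyperelliptic case the relevant genus here is that of the image curve $\mathcal{C}(N,m)$, which is $0$, and not the genus $g = t$ of $X_0(N)$. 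Substituting into the displayed identity gives $N(t,d) = \binom{t+d-1}{d} - (t-1)d - 1$.

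The step I expect to be the main obstacle is establishing that $H_{\mathcal{C}}(d)$ equals the Hilbert polynomial for all $d \geq 0$, not merely for $d \gg 0$. For a general projective curve the Hilbert function and polynomial agree only in large degree; what forces agreement already in low degree --- giving $N(t,0)=0$, $N(t,1)=0$, and $N(t,2)=\binom{t-1}{2}$ --- is precisely the projective normality (arithmetic Cohen--Macaulayness) of the rational normal curve. So the crux is to pin down that the image genuinely is a curve of minimal degree and then to invoke its projective normality; once that is in hand, the binomial bookkeeping is routine.
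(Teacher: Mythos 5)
Your proof is correct, but it takes a genuinely different --- and in fact more rigorous --- route than the paper. The paper's own proof is computational--interpolative: the dimensions $\dim I(\mathcal{C}(N,m))_d$ are obtained by linear algebra on $q$-expansions for $d\le 7$, a recursion is observed in the resulting table and solved to get the closed formula of Lemma \ref{formula_rek}, and then the authors note that the resulting Hilbert function $(t-1)d+1$ is already linear, invoke Hilbert--Serre to say the Hilbert polynomial of a curve is linear, and conclude that the formula must therefore hold for all $d$. That last step is exactly where the paper is thin: a priori the Hilbert function of a projective curve agrees with its Hilbert polynomial only for $d\gg 0$, and agreement on a finite computed range does not by itself force agreement everywhere. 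Your argument supplies precisely this missing ingredient: you identify $\mathcal{C}(N,m)$ as a nondegenerate irreducible curve of minimal degree $t-1$ in $\mathbb{P}^{t-1}$ (nondegeneracy from linear independence of the $f_i$, the degree from parts ii) and iv) of Lemma \ref{l1}), hence a rational normal curve by the classical classification of curves of minimal degree, and then compute its Hilbert function exactly for all $d\ge 0$ via the Veronese parametrization --- projective normality being what forces the Hilbert function and Hilbert polynomial to coincide in every degree, including $d=0,1,2$. What the paper's approach buys is uniformity: the same computational scheme extends to the positive-genus cases of Corollary \ref{T2}, where the image is no longer a curve of minimal degree and your argument would not apply as stated. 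What your approach buys is a complete proof valid for all $d\ge 0$ rather than a verification on a finite range, together with a clean geometric explanation of why the $g=0$ cases and the hyperelliptic $g=t$ cases produce identical numbers: in both situations the image is projectively equivalent to the same rational normal curve.
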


\begin{corollary}\label{T2}
	Let $t=\dim S_m(\Gamma_0(N))\geq 3$, $g$ be genus of $\Gamma_0(N)$.
Then the number $N(t,d)$ of homogeneous polynomials of degree $d\geq 0$  is equal to
 \begin{itemize}
	\item $g=\{1,2\}$ $$N(t,d)=\binom{t+d-1}{d}-(t-1)d-1- (g+(d-2)g)$$
	\item $g>2$ $$N(t,d)=\binom{t+d-1}{d}-(t-1)d-1- ((g-2)+(d-2)(g-1))$$
\end{itemize}

\end{corollary}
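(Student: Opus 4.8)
The plan is to reduce the computation of $N(t,d)=\dim I(\mathcal{C}(N,m))_d$ to the Hilbert function of the homogeneous coordinate ring of $\mathcal{C}(N,m)$. From the graded exact sequence
\begin{equation}
0\longrightarrow I(\mathcal{C}(N,m))_d\longrightarrow \mathcal{P}_d\longrightarrow \bigl(\mathcal{P}/I(\mathcal{C}(N,m))\bigr)_d\longrightarrow 0
\end{equation}
together with $\dim\mathcal{P}_d=\binom{t+d-1}{d}$, one gets $N(t,d)=\binom{t+d-1}{d}-H(d)$, where $H(d):=\dim\bigl(\mathcal{P}/I(\mathcal{C}(N,m))\bigr)_d$. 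The key observation is that $H(d)$ is precisely the dimension of the image of the restriction map $\mathcal{P}_d\to H^0(\mathcal{C}(N,m),\mathcal{O}(d))$, equivalently the dimension of the span inside $S_{md}(\Gamma_0(N))$ of the $d$-fold products $f_{i_1}\cdots f_{i_d}$ of the chosen basis elements.

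Next I would compute $H(d)$ by Riemann--Roch on the smooth curve $\mathcal{C}(N,m)$. By Lemma \ref{l1} the curve is smooth and irreducible of genus $g$, and for $m\ge 4$ its degree is $t+g-1$; hence the hyperplane bundle $\mathcal{L}:=\mathcal{O}_{\mathcal{C}(N,m)}(1)$ has $\deg\mathcal{L}=t+g-1$. Since Lemma \ref{l1}(ii) also gives $t\ge g+2$, we have $\deg\mathcal{L}=t+g-1\ge 2g+1>2g-2$, so $\deg(\mathcal{L}^{\otimes d})>2g-2$ and therefore $h^{1}(\mathcal{L}^{\otimes d})=0$ for every $d\ge 1$. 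Riemann--Roch then yields
\begin{equation}
h^{0}\bigl(\mathcal{L}^{\otimes d}\bigr)=\deg(\mathcal{L}^{\otimes d})+1-g=(t+g-1)\,d+1-g,\qquad d\ge 1.
\end{equation}

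To turn this into a statement about $H(d)$ I must control the surjectivity of $\mathcal{P}_d\to H^{0}(\mathcal{L}^{\otimes d})$, i.e.\ projective normality of the embedding. Because $\deg\mathcal{L}\ge 2g+1$, the classical normal-generation criterion of Castelnuovo and Mumford applies, so the maps $\operatorname{Sym}^{d}H^{0}(\mathcal{L})\to H^{0}(\mathcal{L}^{\otimes d})$ are surjective and $H(d)=h^{0}(\mathcal{L}^{\otimes d})=(t+g-1)d+1-g$ for $d\ge 1$. Substituting into $N(t,d)=\binom{t+d-1}{d}-H(d)$ gives $N(t,d)=\binom{t+d-1}{d}-(t-1)d-1-g(d-1)$, and since $g(d-1)=g+(d-2)g$ this is exactly the $g\in\{1,2\}$ expression; the identical computation with $g=0$, where $\mathcal{C}(N,m)$ is a rational normal curve and hence projectively normal, recovers Theorem \ref{T1}.

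The hard part is the second case, $g>2$, where the stated answer carries the correction $(g-2)+(d-2)(g-1)$ in place of $g(d-1)$; the two differ by exactly $d$, so the claim is that $H(d)$ is $d$ smaller than the Riemann--Roch value, i.e.\ that the multiplication map into $S_{md}(\Gamma_0(N))$ is $d$-dimensionally deficient in degree $d$. Reconciling this with the normal-generation prediction above is the main obstacle, and it demands a genuinely arithmetic analysis of the image of $\operatorname{Sym}^{d}S_{m}(\Gamma_0(N))$ in $S_{md}(\Gamma_0(N))$: one must track the forced higher-order vanishing of products at the cusps and the contribution of the elliptic points of $\Gamma_0(N)$, which enter through the precise shape of $\mathcal{L}^{\otimes d}$ but disappear in the bare degree. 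I would therefore settle the case $d=2$ first, computing the dimension of the space of quadratic relations directly from the weight-$2m$ products, and then propagate the resulting correction to general $d$ by induction, using the surjectivity of the higher multiplication maps; pinning down why the genus threshold $g=2$ is exactly where this correction switches on is the delicate step in which I expect the bulk of the work to lie.
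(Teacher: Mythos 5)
Your reduction $N(t,d)=\binom{t+d-1}{d}-H(d)$ and your treatment of the first bullet are sound, and in fact they are more rigorous than the paper's own argument. The paper reads the correction terms $g+(d-2)g$ and $(g-2)+(d-2)(g-1)$ off its computed tables (degrees $2\le d\le 7$) and then appeals to the Hilbert--Serre theorem to assert that the linear Hilbert polynomial observed there persists for all $d$; no regularity bound is given, so the paper's proof is really an extrapolation from finite data. You instead derive $H(d)=(t+g-1)d+1-g$ for all $d\ge 1$ from Riemann--Roch together with Castelnuovo--Mumford normal generation, which is legitimate in the first case: $g\in\{1,2\}$ and $t\ge 3$ force $m\ge 4$, so Lemma \ref{l1} ii) gives $\deg\mathcal{O}_{\mathcal{C}(N,m)}(1)=t+g-1\ge 2g+1$, and projective normality follows. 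That part of your proposal is a genuinely different, and stronger, route than the paper's.

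The second bullet, however, contains a real gap, and its source is a misidentification of the embedding line bundle. The $g>2$ cases covered by the corollary are precisely the weight-two cases: when $m=2$ one has $t=\dim S_2(\Gamma_0(N))=g$, and the map (\ref{map}) is the canonical map. For nonhyperelliptic $X_0(N)$ (the hyperelliptic ones fall under Theorem \ref{T1}), Lemma \ref{l1} iii) says the image is the canonical embedding, of degree $2g-2=t+g-2$, not $t+g-1$; the statement of Lemma \ref{l1} ii) that you invoke applies only when $m\ge 4$. Consequently there is no ``$d$-dimensional deficiency'' to explain: the hyperplane bundle is the canonical bundle $K$, Riemann--Roch gives $h^0(dK)=(2g-2)d+1-g$ for $d\ge 2$, and surjectivity of $\operatorname{Sym}^d H^0(K)\to H^0(dK)$ is exactly Max Noether's theorem on projective normality of canonical curves of nonhyperelliptic curves. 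Substituting $H(d)=(2g-2)d+1-g$ and $t=g$ into $N(t,d)=\binom{t+d-1}{d}-H(d)$ yields precisely $\binom{t+d-1}{d}-(t-1)d-1-\bigl((g-2)+(d-2)(g-1)\bigr)$, since $(g-2)+(d-2)(g-1)=g(d-1)-d$. Your proposed program --- tracking vanishing at cusps and elliptic points and inducting from $d=2$ --- is therefore unnecessary and would not converge to a proof, because its premise (degree $t+g-1$ for these curves) is false. One further observation your analysis makes visible: as literally stated, the corollary's second bullet is only correct under this reading ($m=2$, $t=g$); for a hypothetical case with $g>2$ and $m\ge 4$, your own normal-generation computation shows the count would follow the first bullet's formula, so the dichotomy in the corollary should really be phrased in terms of the weight $m$ rather than the genus alone.
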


We prove Theorems \ref{T1} and \ref{T2} in Section \ref{HF} by computing the Hilbert polynomial of the curve $\mathcal{C}(N,m)$. Hilbert polynomial is a great tool to  record and handle information about dimensions of homogeneous components of graded modules, (\cite{cocoa2}).

\section{Computing homogeneous polynomials vanishing on cusp forms}
\label{sec_poly}

Let $f_0,\dots, f_{t-1} \in S_m(\Gamma_0(N))$ be a basis of the space of cuspidal modular forms for the congruence subgroup $\Gamma_0(N)$ of weight $m\geq 2$. 

Let $P\in\mathbb{Q}\left[x_0,\dots,x_{t-1} \right] $ be a homogeneous polynomial of degree $d$ 

$$P(x_0,\dots, x_{t-1})=\sum_{\substack{0\leq i_0,\dots,i_{t-1}\leq d\\i_0+\dots +i_{t-1}=d}}a_{i_0,\dots,i_{t-1}}x_0^{i_0}\cdots x_{t-1}^{i_{t-1}}.$$

For a given degree $d\geq 0$, we are interested in those polynomials which vanish on the elements of the basis $f_0,\dots, f_{t-1}$,

\begin{equation}
P(f_0(z),\cdots, f_{t-1}(z))=\sum_{\substack{0\leq i_0,\dots,i_{t-1}\leq d\\i_0+\dots +i_{t-1}=d}}a_{i_0,\dots,i_{t-1}}f_0^{i_0}\cdots f_{t-1}^{i_{t-1}}=0
\end{equation}

for all $\mathfrak{a}_z\in X_0(N)$.

Vector space $\mathcal{P}_d$ of all homogeneous polynomials of degree $d$ is generated with monomials and its dimension can be viewed as the number of coefficients $a_{i_0,\dots,i_{t-1}}$ with respect to the indexing set of the set of monomials of degree $d$ $$I=\left\lbrace (i_0,\dots, i_{t-1}): 0\leq i_0,\dots,i_{t-1}\leq d,i_0+\dots +i_{t-1}=d \right\rbrace.$$ The cardinality of $|I|$ is known as the weak composition problem in combinatorics and the solution is
\begin{equation}\label{d'}
d'=\dim \mathcal{P}_d=|I|=\binom{d+t-1}{d}.  
\end{equation}  

We will order $I$ using the lexicographical ordering,(see \cite{ideals}) so that we consider a polynomial $P$ as a finite linear array of its coefficients 
\begin{equation}\label{coef}
P \longrightarrow (a_0,\dots,a_{d'-1}) %\in \mathbb{P}^{d'-1}
\end{equation}

satisfying the order of corresponding monomials, as basis representation of $P$.

We are interested in subspaces  $I(\mathcal{C}(N,m))_d\subseteq \mathcal{P}_d$  containing polynomials that vanish on the basis of $S_m(\Gamma_0(N))$ for certain choices of $d, N, m$ and their dimensions,
\begin{equation}
I(\mathcal{C}(N,m))_d=\left\lbrace P\in \mathcal{P}_d : \, P(f_0,\cdots,f_{t-1})=0 \right\rbrace 
\end{equation}

where $f_0,\dots,f_{t-1}$ is a basis of $S_m(\Gamma_0(N))$. Each modular form is in practical computations given by a finitely many coefficients of its integral Fourier expansion in the cusp $\infty$. 

The polynomial combination $P(f_0,\dots,f_{t-1})$ is again a modular form, of weight $md$, where $d$ is the degree of the homogeneous polynomial $P$, since cuspidal forms on a given group also make a graded ring, $S(\Gamma_0(N))=\oplus_m S_m(\Gamma_0(N))$. 

The condition of vanishing of the modular form $P(f_0,\dots,f_{t-1})$  is known as the Sturm bound saying that we only consider a finite number $B$ of coefficients of the $q$-expansion of the form to distinguish forms,
\begin{equation}\label{sturm} B_m=\left\lfloor \dfrac{m\left[SL_2(\mathbb{Z}):\Gamma_0(N) \right] }{12}\right\rfloor.
\end{equation}

Similar to \cite{Gal}, \cite{Muic2}, \cite{MuKo} the algorithm for computing polynomials vanishing on a basis of $S_m(\Gamma_0(N))$ is based on the following linear algebra considerations: for fixed values of $d,N,m$ we are solving a homogeneous system of equations, where the unknowns are coefficients of a polynomial $P$, $a_0,\dots,a_{d'-1}$ as in $(\ref{coef})$ and the coefficients of the system are values of $q$-expansions of evaluated monomials $f_0^{i_0}\dots f_t^{i_{t-1}}$ over the indexing set $I$,
\begin{align*}
P(f_0,\dots,f_{t-1})=&\sum_{\substack{0\leq i_0,\dots,i_{t-1}\leq d\\i_0+\dots +i_{t-1}=d}}a_{i_0,\dots,i_{t-1}}f_0^{i_0}\cdots f_{t-1}^{i_{t-1}}\\
=&\sum_{\substack{0\leq i_0,\dots,i_{t-1}\leq d\\i_0+\dots +i_{t-1}=d}}a_{i_0,\dots,i_{t-1}}\left(a_0^{(i_0,\dots,i_{t-1})} +a_1^{(i_0,\dots,i_{t-1})}q+\dots \right)\\
=&p_0+p_1q+p_2q^2+ \cdots.
\end{align*}

The homogeneous system is $p_0=p_1=\cdots=p_{B_{md}}=0$
and its solutions are obtained as the basis of the right kernel of the transpose of $d'\times B_{md}$ matrix whose rows are made of coefficients of $f_0^{i_0}\dots f_{t-1}^{i_{t-1}}$, after ordering the index set $I$.

Here is the algorithm, for a given $N$ and weight $m$, with the use of lexicographic ordering on the set of monomials of degree $d$:

\begin{itemize}
	\item[\textbf{Input:}] $q$-expansions of $f_0,\dots,f_{t-1}$ basis of $S_m(\Gamma_0(N))$
	\item for a degree $d\geq 0$: 
	\begin{itemize}
	        \item    for each monomial index $(i_0,\dots,i_{t-1})\in I$ in the ordered set of monomials of degree $d$:\
	        
	        \begin{itemize}
	        	\item[] compute $f_0^{i_0}\dots f_{t-1}^{i_{t-1}}$
	        \end{itemize}
	           
	          \item create a $d'\times B_{md}$ matrix $A$,  whose rows are first $B_{md}$ coefficients of $q$-expansion of  $f_0^{i_0}\dots f_{t-1}^{i_{t-1}}$ 
	           \item return the dimension (or the elements) of the right kernel of $A$ 
	 \end{itemize}          
	 \item [\textbf{Output:}]  beginning part of an array containing the number of linearly independent homogeneous polynomials of degree $d\geq 0$ vanishing on all forms, i.e.~such that $P(f_0,\dots,f_{t-1})=0$.
\end{itemize}

We can also compute explicitly all of these polynomials. 
In our computations we are using the SAGE software system \cite{sage} and the cusp form basis we are using is generated by command
\begin{verbatim}
CuspForms(Gamma_0(N),m).q_integral_basis(prec)
\end{verbatim}
%\subsection{Irreducibility}

%For the computed polynomials we check irreducibility by factoring as polynomial of one variable, a criterion known as Gauss Lemma.

%\begin{lemma}
%If $P(X_0,\dots,X_n)\in \mathbb{C}\left[X_0,\cdots,X_n \right]$ is irreducible as an univariate polynomial $P(X_i)\in \mathbb{C}\left[X_0,\cdots X_{i-1},X_{i+1},\cdots,X_n \right][X_i]$	then $P$ is irreducible.
	
%\end{lemma}

\section{Results}
\label{results}
Let $t=\dim S_m(\Gamma_0(N))$, $g$ be the genus of $\Gamma_0(N)$.
Using the algorithm of Section \ref{sec_poly}, we were able to compute homogeneous polynomials that vanish on all elements of basis of $S_m(\Gamma_0(N))$, for small degrees up to $7$ or at times lower. We order results by number of elements in the basis, $2\leq t\leq 8$. We will denote hyperelliptic modular curves $X_0(N)$ with an asterix. From  \cite{Ogg} we know that  $X_0(N)$ is a  hyperelliptic curve if and only if   $$N\in \{22, 23, 26, 28, 29, 30,  31, 33, 35, 37, 39, 40, 41, 46, 47, 48, 50, 59, 71\}.$$
  
% For $g\geq0$ of the modular curve $X_0(N)$ we will denote possible cases for maps $(\ref{map})$ defined by basis of $ S_m(\Gamma_0(N))$ by listing ordered pairs $(N,m)$.

We start with the case $t=2$. We have $4$ cases with $g=0$, one for $g=1$ case and $8$ hyperelliptic cases when $g=2$:
% as is shown in Table \ref{t=2}.
%	\begin{table}[ht]
%	\caption{$\dim S_m(\Gamma_0(N))=2$ cases}
%	\begin{center}
%		\begin{tabular}{c|c|l}
%			g&m&N\\
%			\hline\hline

%			0&8&4\\
%			&10&3\\
%			&12,14&2\\
%			\hline
%			1&4&11\\
%			\hline
%			2&2&$22^*$,\,$23^*$,\,$26^*$,\,$28^*$\\
%			&&$29^*$,\,$31^*$,\,$37^*$\\			
%		\end{tabular}
%	\end{center}
%	\label{t=2}
%\end{table}

\begin{itemize}
		\item genus 0 : $(2,12), (2,14), (3,10), (4,8)$
	\item genus 1 : $(11,4)$
	\item genus 2 : $(22,2)^*, (23,2)^*, (26,2)^*, (28,2)^*, (29,2)^*, (31,2)^*,\\ (37,2)^*, (50,2)^*$ 
\end{itemize}

\begin{proposition}\label{prop1}
Let $t=\dim S_m(\Gamma_0(N))=2$ and $\left\lbrace f_0,f_1 \right\rbrace $ be the basis of $S_m(\Gamma_0(N))$. There are no homogeneous polynomials in two variables of degree $d=0$ to $d=7$ such that $P(f_0,f_1)=0$. 	
\end{proposition}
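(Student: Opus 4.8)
The plan is to exploit the fact that the case $t=2$ is degenerate: the target of the map $(\ref{map})$ is $\mathbb{P}^{t-1}=\mathbb{P}^1$, so there is no room left to impose vanishing conditions. By Lemma \ref{l1} i) the image $\mathcal{C}(N,m)$ is an irreducible projective curve, i.e. a one-dimensional irreducible closed subvariety of $\mathbb{P}^1$. Since $\mathbb{P}^1$ is itself irreducible of dimension one, its only one-dimensional closed subvariety is $\mathbb{P}^1$ itself, so $\mathcal{C}(N,m)=\mathbb{P}^1$. The homogeneous ideal of $\mathbb{P}^1$ inside $\mathbb{Q}[x_0,x_1]$ is the zero ideal, because any nonzero homogeneous $P$ would cut out a proper (zero-dimensional) subscheme and therefore could not vanish on all of $\mathbb{P}^1$. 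Hence $I(\mathcal{C}(N,m))_d=0$ for every $d$, which in particular yields the claim for $0\le d\le 7$.

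For an elementary, self-contained version I would argue directly rather than quote the geometry. Write $P(x_0,x_1)=\sum_{i=0}^{d}a_i x_0^{i}x_1^{d-i}$ and set $h=f_0/f_1$, a meromorphic function on $X_0(N)$. Dividing the identity $P(f_0,f_1)=0$ by $f_1^{d}$ (valid away from the zeros of $f_1$) turns it into $Q(h)=0$, where $Q(x)=P(x,1)=\sum_{i=0}^{d}a_i x^{i}$ has degree at most $d$. Because $f_0$ and $f_1$ are linearly independent, $h$ is a nonconstant meromorphic function on the connected Riemann surface $X_0(N)$; such a function cannot satisfy a nontrivial polynomial relation $Q(h)=0$ with constant coefficients, since that would force $h$ to take only finitely many values and hence, by connectedness of $X_0(N)$, to be constant. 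Therefore $Q=0$, i.e. all $a_i=0$ and $P=0$.

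The one point that needs care --- and it is the crux of the argument rather than a genuine obstacle --- is verifying that $h=f_0/f_1$ is nonconstant, equivalently that the map $(\ref{map})$ does not collapse to a point. This is immediate from the fact that $\{f_0,f_1\}$ is a basis: a constant ratio $f_0=c\,f_1$ would contradict linear independence. I would also remark that the argument is completely insensitive to the upper bound $d\le 7$ and to the specific pairs $(N,m)$ listed above; the conclusion holds for \emph{all} degrees $d\ge 0$ and for every $N,m$ with $t=2$, the restriction to $d\le 7$ merely matching the range of the computations reported in Section \ref{results}.
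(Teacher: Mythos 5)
Your proof is correct, and it takes a genuinely different route from the paper. The paper gives no theoretical argument for Proposition \ref{prop1}: it is reported as an outcome of the computations of Section \ref{sec_poly}, i.e.\ for each listed pair $(N,m)$ with $t=2$ and each degree $d\le 7$ the algorithm finds that the right kernel of the matrix of truncated $q$-expansions (up to the Sturm bound) is trivial. Your argument is structural, and both of your versions are sound: the geometric one, where Lemma \ref{l1} i) gives that $\mathcal{C}(N,m)$ is an irreducible one-dimensional closed subvariety of $\mathbb{P}^{1}$, hence equal to $\mathbb{P}^{1}$, whose homogeneous ideal in $\mathbb{Q}[x_0,x_1]$ is zero; and the elementary one, where a nontrivial relation $Q(h)=0$ for $h=f_0/f_1$ would force $h$ to take finitely many values and hence be constant, contradicting the linear independence of $f_0,f_1$ (and dehomogenization $P\mapsto P(x,1)$ is injective on homogeneous forms of fixed degree, so $Q=0$ really does give $P=0$). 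What your approach buys: it is uniform in $(N,m)$, covers all three genus types occurring for $t=2$ (including the genus-$1$ case $(11,4)$), and removes the restriction $d\le 7$, showing $I(\mathcal{C}(N,m))_d=0$ for every $d\ge 0$; it also matches formula (\ref{formula1}) specialized to $t=2$, where $\binom{d+1}{d}-d-1=0$ --- a case not covered by Theorem \ref{T1} itself, since that theorem assumes $t\ge 3$. What the paper's approach buys: it is the same algorithmic pipeline that produces all the data for $t\ge 3$, so the proposition there is one row of a uniform experimental record rather than a separately argued fact.
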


\subsection{$t=3$}

 Since $X_0(N)$ is mapped by $(\ref{map})$ to the projective plane $\mathbb{P}^2$ its image is a planar projective curve hence given by one irreducible defining equation, since curves are hypersurfaces of projective plane. The degree for which this happens is then the degree of the curve and for all higher degrees we can find more than one polynomial vanishing on the curve. These higher degree polynomials are reducible, because they have the defining polynomial as a factor.

There are $16$ cases for $g=0$ and $11$ for $g=3$ of which $7$ are hyperelliptic: %which we show in Table \ref{t=3}:
%	\begin{table}[ht]
%	\caption{$\dim S_m(\Gamma_0(N))=3$ cases}
%	\begin{center}
%		\begin{tabular}{c|c|l}
%		g&m&N\\
%		\hline\hline
%		0&4& 10,12,13,16\\
%		&6&6,7,8,9\\
%		&8&5,7\\
%		&10&4,5\\
%		&12,14&3\\
%		&16,18&2\\
%		\hline
%		3&2&$30^*$,\,$33^*$,\,34,\,$35^*$,\,$39^*$,\,$40^*$,\\
%		&&$41^*$,\,43,\,45,\,$48^*$,64\\			
%		\end{tabular}
%	\end{center}
%	\label{t=3}
%\end{table}

\begin{itemize}
	\item genus 0 : (2,16), (2,18), (3,12), (3,14), (4,10), (5,8), (5,10), (6,6), (7,6), (7,8), (8,6), (9,6), (10,4), (12,4), (13,4), (16,4)
	\item genus 3: $(30,2)^*$, $(33,2)^*$ (34,2), $(35,2)^*$, $(39,2)^*$, $(40,2)^*$,  $(41,2)^*$, (43,2), (45,2), $(48,2)^*$, (64,2)
\end{itemize}

\begin{proposition}\label{prop2}
	Let $t=\dim S_m(\Gamma_0(N))=3$ and  $\left\lbrace f_0,f_1,f_2 \right\rbrace $ be the basis of $S_m(\Gamma_0(N))$. In Table $\ref{3formeirreduc.}$ we give the number of all linearly independent homogeneous polynomials up to degree $7$ and the number of irreducible ones among them that satisfy $P(f_0,f_1,f_2)=0$.

	\begin{table}[ht]
		\caption{Number of polynomials for $\dim S_m(\Gamma_0(N))=3$}
		\begin{center}
			\begin{tabular}{|c||c|c|c|c|c|c|c|c|}
				\hline
			
				\multirow{2}{*}{g}&
				\multicolumn{8}{c|}{degree of $P$} \\
				%\hline
				& 0&1 & 2& 3 & 4 & 5 & 6&7 \\
				\hline
				\hline
			0&0&0& 1 & 3 & 6 & 10 & 15 & 21 \\
				\hline
		 irred.&0&0&1 & 0 & 0 & 0 & 0 & 0 \\
				\hline\hline
			3&0&0 & 0 & 0  & 1 & 3 & 6 & 10 \\
				\hline
				irred.  & 0&0&0 & 0 & 1 & 0 & 0 & 0 \\
				\hline\hline
						 3, hypell& 0&0&1 & 3 & 6 & 10 & 15 & 21 \\
			\hline
			irred. &0&0& 1 & 0 & 0 & 0 & 0 & 0 \\	
			\hline
			\end{tabular}
		\end{center}
		\label{3formeirreduc.}
	\end{table}

\begin{table}[ht]
	\caption{Equations for non-hyperelliptic $X_0(N)$ with $g=3$ from weight $2$ cusp forms}
	\begin{center}
		{\renewcommand{\arraystretch}{1.4}
			\begin{tabular}{c|l}
				
				$N=34$& $x^{2} y^{2} - 2 \, x y^{3} + y^{4} - x^{3} z - 3 \, x y^{2} z - 4 \, y^{3} z + 3 \, x^{2} z^{2} + 3 \, x y z^{2} $\\&$+ 6 \, y^{2} z^{2} - 4 \, x z^{3} - 4 \, y z^{3} + 2 \, z^{4}$\\
				\hline
				$N=43$&$x^{2} y^{2} - x y^{3} + 9 \, y^{4} - x^{3} z - 2 \, x^{2} y z - 3 \, x y^{2} z - 24 \, y^{3} z$\\ & $ + 2 \, x^{2} z^{2} + 5 \, x y z^{2} + 28 \, y^{2} z^{2} - 3 \, x z^{3} - 16 \, y z^{3} + 4 \, z^{4}$\\
				\hline
				$N=45$&$x^{2} y^{2} - x^{3} z + y^{3} z - x y z^{2} + 5 \, z^{4}$
		\end{tabular}}
	\end{center}
	\label{jdbe_3}
\end{table}

\end{proposition}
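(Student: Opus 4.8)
The plan is to read the entire table off the structural description of $\mathcal{C}(N,m)$ as a plane curve supplied by Lemma~\ref{l1}, and then to confirm the numbers (and to produce the explicit equations) with the algorithm of Section~\ref{sec_poly}. The starting observation is that for $t=3$ the map $(\ref{map})$ lands in $\mathbb{P}^2$, so $\mathcal{C}(N,m)$ is a plane projective curve, i.e.\ a hypersurface of codimension one. By Lemma~\ref{l1} i) it is irreducible and reduced, so its homogeneous ideal is prime; since $\mathbb{Q}[x,y,z]$ is a unique factorization domain, this ideal is principal, $I(\mathcal{C}(N,m))=(F)$ for a single irreducible form $F$ whose degree equals $\delta:=\deg\mathcal{C}(N,m)$.

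First I would turn this principal-ideal description into the graded dimension count. Since $I(\mathcal{C}(N,m))_d=F\cdot\mathcal{P}_{d-\delta}$ and $\mathcal{P}_{d-\delta}$ is the space of forms of degree $d-\delta$ in three variables, I obtain
\begin{equation}
\dim I(\mathcal{C}(N,m))_d=
\begin{cases}
0, & d<\delta,\\
\binom{d-\delta+2}{2}, & d\geq\delta.
\end{cases}
\end{equation}
In particular there is no nonzero form of degree below $\delta$ vanishing on the curve (otherwise it would be a factor of $F$ of too-small degree), which is exactly what pins down the first nonzero entry in each row. The irreducible count is then immediate: at $d=\delta$ the only vanishing form up to scalar is $F$ itself, giving exactly one irreducible polynomial; for $d>\delta$ every element of $I(\mathcal{C}(N,m))_d$ is a proper multiple of $F$, hence reducible; for $d<\delta$ there are none. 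This already reproduces the two-line (total and irreducible) structure of each block of Table~\ref{3formeirreduc.}.

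Next I would substitute the three values of $\delta$ furnished by Lemma~\ref{l1}. For $g=0$ with $m\geq 4$, part ii) gives $\delta=t+g-1=2$, so $\dim I(\mathcal{C}(N,m))_d=\binom{d}{2}$ for $d\geq 2$ and the row reads $0,0,1,3,6,10,15,21$. For $g=3$ and $m=2$ on a non-hyperelliptic $X_0(N)$, part iii) gives the canonical degree $\delta=2g-2=4$, so $\dim I(\mathcal{C}(N,m))_d=\binom{d-2}{2}$ for $d\geq 4$ and the row reads $0,0,0,0,1,3,6,10$, the single degree-$4$ generator being the plane-quartic canonical model. For $g=3$, $m=2$ hyperelliptic, part iv) gives $\delta=g-1=2$, and the row reads $0,0,1,3,6,10,15,21$ again. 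Thus all three blocks of the table are forced by Lemma~\ref{l1} together with the principal-ideal structure.

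Finally, the explicit defining equations of Table~\ref{jdbe_3} for the non-hyperelliptic genus-$3$ curves $N=34,43,45$ are produced by running the algorithm of Section~\ref{sec_poly} at $d=4$: the right kernel of the associated matrix $A$ is one-dimensional, and its generator, after clearing denominators, is the listed quartic. The only genuine obstacle is to make the principality step airtight rather than merely plausible; one must be sure that the minimal-degree vanishing polynomial returned by the algorithm is truly irreducible, so that $I(\mathcal{C}(N,m))=(F)$ with $F$ the defining polynomial and not some reducible form attached to a nonreduced or reducible locus. This is guaranteed \emph{a priori} by the smoothness and irreducibility in Lemma~\ref{l1} i), and in the three explicit cases it is also checked directly by factoring the computed quartics. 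Once irreducibility is secured, every remaining entry of the table is a routine binomial-coefficient evaluation.
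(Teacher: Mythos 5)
Your argument is correct --- the principal-ideal count $\dim I(\mathcal{C}(N,m))_d=\binom{d-\delta+2}{2}$ together with the three values $\delta=2,4,2$ does reproduce every entry of Table \ref{3formeirreduc.} --- but it runs in the opposite logical direction from the paper. The paper establishes Proposition \ref{prop2} computationally: for each listed pair $(N,m)$ it runs the algorithm of Section \ref{sec_poly} degree by degree up to $d=7$ and records the kernel dimensions; the plane-curve/principal-ideal structure you formalize appears there only as informal surrounding discussion (why the first nonzero entry is the unique irreducible polynomial, and why all higher-degree ones are reducible multiples of it). In particular the paper does not take the curve degrees from Lemma \ref{l1}; rather, it uses the \emph{computed} degree of the irreducible equation together with formula (\ref{formula_deg}) to deduce, in the Corollary following the proposition, that the map is birational. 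You invert this: you feed in the degree statements of Lemma \ref{l1} --- which for $m\geq 4$ are equivalent, via (\ref{formula_deg}), to birationality --- and deduce the whole table a priori, leaving only the three explicit quartics of Table \ref{jdbe_3} to the computer. Your route buys generality and machine-independence: the counts are proved for all $(N,m)$ with $t=3$ and for all degrees $d$, not just the finitely many cases and degrees $d\leq 7$ actually computed. The cost is that the paper's subsequent Corollary can no longer be ``concluded'' from the table without circularity, since your input already presupposes the birationality (resp.\ the canonical-map description) that the paper extracts as a consequence of the computation; the paper's computational route keeps that deduction independent of the cited results of Mui\'{c} and Miranda.
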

We see that in the hyperelliptic case the numbers coincide with those for genus $0$ cases. The numbers appearing in Proposition $\ref{prop2}$ are the initial part of the well-known integer sequence called triangle numbers, \cite{oeis}. They also appear in the usual genus-degree formula for curves (\cite{Anni}, Theorem 2.1).
This happens because to raise the degree we multiply a polynomial with the monomial $x_i$, $i=\{0,1,2\}$.

In \cite{Muic2} and \cite{MuKo} we have a formula relating degree $d$ of the image curve $\mathcal{C}(N,m)$ and the degree  $d(f_0,f_1,f_2)$ of the map $(\ref{map})$
\begin{equation}\label{formula_deg}
d\cdot d(f_0,f_1,f_2)=\dim S_m(\Gamma_0(N))+ g(\Gamma_0(N))-1 -\epsilon_{m},  
\end{equation}	
where $\epsilon_2=1$  and $\epsilon_m=0$ for $m\ge 4$ is the number of possible common zeroes of the basis cusp forms. 
Given $t=3$, the right-hand side of $(\ref{formula_deg})$ can attain values $3+0-1-0=2$ for $g=0$, because we must have $m>2$, or $3+3-1-1=4$ for $g=3$ and $m=2$.
Since we have computed irreducible equations for $\mathcal{C}(N,m)$ of that exact degree we can conclude that the map is birational.

\begin{corollary}
Assume that $\dim S_m(\Gamma_0(N))=3$ and let $\left\lbrace f_0,f_1,f_2 \right\rbrace $ be the basis of $S_m(\Gamma_0(N))$. Then the map $X_0(N)\to\mathbb{P}^2$ given by $$\mathfrak{a}_z\mapsto (f_0(z):f_1(z):f_2(z))$$
is birational equvalence of $X_0(N)$ and the image curve $C(f_0,f_1,f_2)$. The curve $C(f_0,f_1,f_2)$ is a quadric if $g(X_0(N))=0$ or $g(X_0(N))=3$ and the curve is hyperelliptic or a quartic if $g(X_0(N))=3$. The quartic equations (in homogeneous coordinates $(x:y:z)$) are given in Table $\ref{jdbe_3}$.
\end{corollary}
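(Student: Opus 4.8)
The plan is to combine the structural facts of Lemma~\ref{l1} with the degree relation \eqref{formula_deg} and the explicit computations recorded in Proposition~\ref{prop2}. First I would pin down which geometric situations can occur when $t=3$. By Lemma~\ref{l1}, if $m=2$ then $t=g$, forcing $g=3$; if $m\ge 4$ then $t\ge g+2$, forcing $g\le 1$. Since the levels tabulated in Section~\ref{results} for $t=3$ realise only $g=0$ (always with $m\ge 4$) and $g=3$ (always with $m=2$), it suffices to treat these two values of the genus, splitting the $g=3$ case according to whether $X_0(N)$ is hyperelliptic.

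Next I would determine the degree $d$ of the image curve $C(f_0,f_1,f_2)\subset\mathbb{P}^2$ in each case. Because $C(f_0,f_1,f_2)$ is a plane curve, its degree equals the degree of its irreducible defining polynomial, and by Lemma~\ref{l1}(i) the image is irreducible, so an irreducible vanishing polynomial of degree $e$ already cuts out the whole image and forces $d=e$. When $g=0$, the right-hand side of \eqref{formula_deg} equals $3+0-1-0=2$, and the genus-$0$ row of Table~\ref{3formeirreduc.} exhibits a single irreducible quadratic relation and none of lower degree, so $d=2$ and the curve is a conic. When $g=3$ and $X_0(N)$ is non-hyperelliptic, Lemma~\ref{l1}(iii) identifies \eqref{map} with the canonical embedding, whose image has degree $2g-2=4$; the explicit quartics are the entries of Table~\ref{jdbe_3}. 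When $g=3$ and $X_0(N)$ is hyperelliptic, Lemma~\ref{l1}(iv) gives image degree $g-1=2$, again a conic, consistent with the hyperelliptic row of Table~\ref{3formeirreduc.} matching the genus-$0$ counts.

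With $d$ in hand, birationality follows by reading \eqref{formula_deg} as $d\cdot d(f_0,f_1,f_2)=\dim S_m(\Gamma_0(N))+g-1-\epsilon_m$. For $g=0$ this is $2\cdot d(f_0,f_1,f_2)=2$, hence $d(f_0,f_1,f_2)=1$; for non-hyperelliptic $g=3$ it is $4\cdot d(f_0,f_1,f_2)=4$, hence again $d(f_0,f_1,f_2)=1$. In both cases \eqref{map} is generically one-to-one and therefore a birational equivalence onto its image, as asserted. (In the hyperelliptic subcase the same relation reads $2\cdot d(f_0,f_1,f_2)=4$, giving $d(f_0,f_1,f_2)=2$; this is precisely the degree-$2$ canonical map of Lemma~\ref{l1}(iv), so there the birationality statement should be understood as applying to the remaining cases, the map being $2$-to-$1$ onto the conic.)

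The step I expect to be the main obstacle is the one linking the \emph{numerical} output of the algorithm to the \emph{geometric} degree of $C(f_0,f_1,f_2)$: I must ensure that the polynomial produced by the right-kernel computation is genuinely irreducible of the stated degree and defines the image curve itself rather than a proper multiple or a strictly larger curve. The Sturm bound \eqref{sturm} supplies one half of this, guaranteeing that vanishing of the $q$-expansion up to $B_{md}$ coefficients forces the polynomial to vanish identically on $\mathcal{C}(N,m)$; the other half is an irreducibility check, which for the conics is immediate (an irreducible quadric containing the irreducible image must equal it, so $d=2$) and for the quartics is a direct factorisation check on the explicit equations of Table~\ref{jdbe_3}. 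Once $\deg C$ is secured independently in this way, inverting \eqref{formula_deg} to read off $d(f_0,f_1,f_2)=1$ closes the argument with no further input.
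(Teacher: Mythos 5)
Your proposal is correct and takes essentially the same route as the paper: the paper likewise deduces the corollary from the degree relation \eqref{formula_deg} evaluated at $t=3$ (right-hand side $2$ for $g=0$, $4$ for $g=3$) together with the irreducible equations of exactly those degrees computed in Proposition~\ref{prop2} and Table~\ref{jdbe_3}, concluding $d(f_0,f_1,f_2)=1$. Your explicit caveat about the hyperelliptic subcase --- where \eqref{formula_deg} gives $2\cdot d(f_0,f_1,f_2)=4$, hence a $2$-to-$1$ map consistent with Lemma~\ref{l1}(iv) --- is in fact more careful than the paper's own wording, which asserts birationality without excluding that case.
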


%Trebalo bi komentirati zašto trokutasti brojevi tj i činjenicu da je prosti ideal te krivulje generiran jednim elementom - to ću kasnije u drugom poglavlju o idealima. 

\subsection{$4\leq t\leq 8$}

 \begin{table}[ht]
	\caption{ $(N,m)$ for $4\leq\dim S_m(\Gamma_0(N))\leq 8$}
	\begin{center}
		\begin{tabular}{|c|c|l|}
		\hline
			t&g&(N,m)\\
			\hline\hline
			4&0&(2,20), (2,22), (3,16) (4,12)\\
			\cline{2-3}
			&1&(14,4), (15,4), (17,4), (19,4), (11,6)\\
			\cline{2-3}
			&4&(38,2), (44,2), $(47,2)^*$, (53,2), (54,2), (61,2), (81,2)\\

			\hline\hline
			5&0&(2,24), (2,26), (3,18), (3,20), (4,14), (5,12), (5,14), (6,8), (7,10),\\
			
			&& (8,8), (9,8), (10,6), (13,6), (18,4), (25,4)\\
			\cline{2-3}
			&2&$(23,4)*$\\
			\cline{2-3}
			&5&(42,2),$(46,2)^*$, (51,2), (52,2), (55,2), (56,2), (57,2) $(59,2)^*$,\\
			&& (63,2), (65,2), (67,2), (72,2), (73,2), (75,2)\\
			\hline
			\hline
			6&0&(2,28),(2,30), (3,22), (4,16)\\
			\cline{2-3}
			&1&(11,8), (17,6), (20,4), (21,4), (27,4)\\
			\cline{2-3}
			&6&(58,2), $(71,2)^*$ (79,2) \\
			\hline\hline
			7&0&(2,32), (2,34), (3,24), (3,26), (4,18), (5,16), (5,18) (6,10), (7,12), \\
			&& (7,14), (8,10), (9,10), (12,6), (13,8), (16,6)\\
			\cline{2-3}
			&2&$(22,4)^*$, $(29,4)^*$, $(31,4)^*$\\
			\cline{2-3}
			&7&(60,2), (62,2), (68,2), (69,2), (77,2), (80,2), (83,2), (85,2), (89,2),\\
			&& (91,2), (97,2), (98,2)\\
			
			\hline\hline
			8&0&(2,36), (2,38), (3,28), (4,20)\\
			\cline{2-3}
			&1&(11,10), (14,6), (15,6), (19,6), (24,4), (32,4)\\
			\cline{2-3}
			&8&(74,2), (76,2) \\
			\hline
					
		\end{tabular}
	\end{center}
	\label{t=45678}
\end{table}

For $t=4$ there are $4$ cases for $g=0$, $5$ for $g=1$ and $7$ for $g=4$ cases of which one is hyperelliptic;% see Table \ref{t=4}.

%\begin{itemize}
%	\item genus 0: (2,20), (2,22), (3,16) (4,12)
%	\item genus 1: (14,4), (15,4), (17,4), (19,4), (11,6)
%	\item genus 4: (38,2), (44,2), $(47,2)^*$, (53,2), (54,2), (61,2), (81,2)
%\end{itemize}

%\begin{proposition}\label{prop3}
%		Let $t=\dim S_m(\Gamma_0(N))=4$ and let $\left\lbrace f_0,f_1,f_2,f_3 \right\rbrace $ be the basis of $S_m(\Gamma_0(N))$. In Table $\ref{4formeirec}$ we see the number of all linearly independent homogeneous polynomials and the number of irreducible ones among them that satisfy $P(f_0,f_1,f_2,f_3)=0$.

%\end{proposition}

%----možda - pogledati kod Mikoča koliko ih je od stupnja 2, irreduc.ibilne, ubaciti primjer konkretni

for $t=5$ there are $15$ cases of $g=0$, one hyperelliptic case of $g=2$ and $14$ for $g=5$ of which two are hyperelliptic;% see Table \ref{t=5}.
%	\begin{table}[ht]
%	\caption{$\dim S_m(\Gamma_0(N))=5$ cases}
%	\begin{center}
%		\begin{tabular}{c|c|l}
%			g&m&N\\
%			\hline\hline
%			0&4& 18,25\\
%			&6&10,13\\
%			&8&6,8,9\\
%			&10& 7\\
%			&12,14&5\\
%			&14&4\\
%			& 18,20& 3\\
%			& 24,26&2\\
%			\hline
%			2&4&$23^*$\\
%			\hline
%			5&2&42,\, $46^*$,\, 51,\, 52,\, 55,\, 56,\\
%			&&57, $59^*$,\, 63,\, 65,\, 67,\, 72,\, 73,\, 75
			
%			
%		\end{tabular}
%	\end{center}
%	\label{t=5}
%\end{table}

%\begin{itemize}
%	\item genus 0: (2,24), (2,26), (3,18), (3,20), (4,14), (5,12), (5,14), (6,8), (7,10), (8,8), (9,8), (10,6), (13,6), (18,4), (25,4)
%	\item genus 2: $(23,4)^*$
%	\item genus 5: (42,2),$(46,2)^*$, (51,2), (52,2), (55,2), (56,2), (57,2) $(59,2)^*$, (63,2), (65,2), (67,2), (72,2), (73,2), (75,2)
%\end{itemize}

%\begin{proposition}\label{prop4}
%	Let $t=\dim S_m(\Gamma_0(N))=5$ and let $\left\lbrace f_0,f_1,f_2,f_3,f_4 \right\rbrace $ be the basis of $S_m(\Gamma_0(N))$. In Table $\ref{5formeirec}$ we see the number of all linearly independent homogeneous polynomials and the number of irreducible ones among them that satisfy $P(f_0,f_1,f_2,f_3,f_4)=0$.

%\end{proposition}

for $t=6$ we have $4$ cases of $g=0$, $5$ for $g=1$ and $3$ cases for $g=6$ of which one is hyperelliptic;% see Table \ref{t=6}:

%	\begin{table}[ht]
%	\caption{$\dim S_m(\Gamma_0(N))=6$ cases}
%	\begin{center}
%		\begin{tabular}{c|c|l}
%			g&m&N\\
%			\hline\hline
%			0&16&4\\
%			&22&3\\
%			&28,  30& 2\\

%			\hline
%			1&4&20, 21, 27\\
%			&6&17\\
%			&8&11\\
%			\hline
%			6&2&58,\, $71^*$,\,79

%		\end{tabular}
%	\end{center}
%	\label{t=6}
%\end{table}

%\begin{itemize}
%	\item genusa 0: (2,28),(2,30), (3,22), (4,16)
%	\item genus 1: (11,8), (17,6), (20,4), (21,4), (27,4)
%	\item genusa 6: 	(58,2), $(71,2)^*$ (79,2) 
%\end{itemize}

%\begin{proposition}\label{prop5}
%	Let $t=\dim S_m(\Gamma_0(N))=6$. In Table $\ref{6formeirec}$ we see the number of all linearly independent homogeneous polynomials and the number of irreducible ones among them that satisfy $P(f_0,f_1,f_2,f_3,f_4,f_5)=0$.

%\end{proposition}

for $t=7$ we have $15$ cases for $g= 0$, $3$ hypereliptic cases for $g=2$ and $12$ for $g=7$ cases;% as follows:

%\begin{table}[ht]
%	\caption{$\dim S_m(\Gamma_0(N))=7$ cases}
%	\begin{center}
%		\begin{tabular}{c|c|l}
%			g&m&N\\
%			\hline\hline
%			0&6&12,16\\
%			&8&13\\
%			&10&8,9\\
%			&12, 14&7\\
%			&10&6\\
%			&16,18&5\\
%			&18&4\\
%			&24,26&3\\
%			&32,34& 2\\
				
%			\hline
%			2&4&$22^*$, $29^*$, $31^*$\\
%			&6&17\\
%			&8&11\\
%			\hline
%			7&2&60,\, 62,\, 68,\, 69,\,77,\, 80\, 83,\\
%			&&85,\, 89,\, 91,\, 97,\, 98

%		\end{tabular}
%	\end{center}
%	\label{t=7}
%\end{table}

%\begin{itemize}
%	\item genus 0: (2,32), (2,34), (3,24), (3,26), (4,18), (5,16), (5,18) (6,10), (7,12), (7,14), (8,10), (9,10), (12,6), (13,8), (16,6)
%	\item genus 2: $(22,4)^*$, $(29,4)^*$, $(31,4)^*$
%	\item genus 7: (60,2), (62,2), (68,2), (69,2), (77,2), (80,2), (83,2), (85,2), (89,2), (91,2), (97,2), (98,2)
%\end{itemize}

%%\begin{proposition}\label{prop6}
%	Let $t=\dim S_m(\Gamma_0(N))=7$. In Table $\ref{7formeirec}$ we see the number of all linearly independent homogeneous polynomials and the number of irreducible ones among them that satisfy $P(f_0,f_1,f_2,f_3,f_4,f_5)=0$.

%\end{proposition}

%\subsection{ $t=\dim S_m(\Gamma_0(N))=8$}

and finally for $t=8$ we have $4$ cases for $g=0$, $6$ for $g=1$ and $2$ cases for $g=8$, all of which are in Table \ref{t=45678}.

%\begin{table}[ht]
%	\caption{$\dim S_m(\Gamma_0(N))=8$ cases}
%	\begin{center}
%		\begin{tabular}{c|c|l}
%			g&m&N\\
%			\hline\hline
%			0&20&4\\
%			&28&3\\
%			&36,38& 2\\

%			\hline
%			1&4&24, 32\\
%			&6&14,15,19\\
%			&10&11\\
%			\hline
%			8&2&74,\, 76

%		\end{tabular}
%	\end{center}
%	\label{t=8}
%\end{table}

%\begin{itemize}
%	\item genus 0: (2,36), (2,38), (3,28), (4,20)
%	\item genus 1: (11,10), (14,6), (15,6), (19,6), (24,4), (32,4)
%	\item genus 8: (74,2), (76,2)
%\end{itemize}

\begin{proposition}\label{prop3}
	Let $4\leq t\leq 8$. In Table $\ref{tablica_brojevi}$ we see the numbers of all linearly independent homogeneous polynomials of degree $2\leq d\leq 8$ that vanish on the basis of $\dim S_m(\Gamma_0(N))$, $P(f_0,f_1,\cdots,f_{t-1})=0$. There are no such polynomials for $d< 2$.

\end{proposition}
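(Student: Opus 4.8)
The plan is to split Proposition \ref{prop3} into its two components: the vanishing assertion for $d<2$, which is purely theoretical and requires no computation, and the explicit entries of Table \ref{tablica_brojevi} for $2\leq d\leq 8$, which are the output of the algorithm of Section \ref{sec_poly}. First I would dispose of the degrees $d=0$ and $d=1$ directly. For $d=0$ the space $\mathcal{P}_0$ consists of the constants, and since $\mathcal{C}(N,m)$ is a nonempty irreducible projective curve by Lemma \ref{l1} i), the only constant vanishing on it is $0$; hence $I(\mathcal{C}(N,m))_0=0$. For $d=1$ a linear form $\sum_i a_i X_i$ lies in $I(\mathcal{C}(N,m))_1$ exactly when $\sum_i a_i f_i=0$ as an element of $S_m(\Gamma_0(N))$, i.e.\ when the $q$-expansion $\sum_i a_i f_i(z)$ vanishes identically. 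Because $f_0,\dots,f_{t-1}$ were chosen to be a \emph{basis}, they are linearly independent, forcing all $a_i=0$ and $I(\mathcal{C}(N,m))_1=0$. This establishes that there are no homogeneous polynomials of degree $d<2$ vanishing on the basis.

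For each fixed $t$ and each degree $2\leq d\leq 8$ I would compute $\dim I(\mathcal{C}(N,m))_d$ using the reduction already set up in Section \ref{sec_poly}. The key observation is that for $P\in\mathcal{P}_d$ the evaluation $P(f_0,\dots,f_{t-1})$ is again a cusp form, now of weight $md$ for $\Gamma_0(N)$, and by the Sturm bound $(\ref{sturm})$ such a form is zero if and only if its first $B_{md}$ Fourier coefficients vanish. Thus $I(\mathcal{C}(N,m))_d$ is the kernel of the linear map sending the coefficient vector $(a_0,\dots,a_{d'-1})$ of $P$ to the truncated $q$-expansion of $P(f_0,\dots,f_{t-1})$; equivalently it is the right kernel of the $d'\times B_{md}$ matrix $A$ whose rows are the first $B_{md}$ coefficients of each monomial $f_0^{i_0}\cdots f_{t-1}^{i_{t-1}}$, indexed by $I$ in lexicographic order. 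The tabulated number is then $\dim\ker A = d'-\operatorname{rank} A$, with $d'=\binom{t+d-1}{d}$ from $(\ref{d'})$. Carrying this out for every pair $(N,m)$ listed in Table \ref{t=45678} and recording the common value across each class fills in Table \ref{tablica_brojevi}.

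The main obstacle is ensuring that the finite computation faithfully detects the infinite-dimensional vanishing condition and that the ranks are exact. The first point is precisely what the Sturm bound guarantees, so one only needs to compute each $q$-expansion to precision at least $B_{md}$; since the $q$-integral basis is defined over $\mathbb{Z}$, the matrix $A$ has rational entries and its rank is computed exactly, with no floating-point error. A secondary check, which also explains why the tabulated numbers depend only on $t$, on the genus $g$, and on whether $X_0(N)$ is hyperelliptic rather than on the individual $(N,m)$, is that the values produced must be consistent with the degree and genus of $\mathcal{C}(N,m)$ recorded in Lemma \ref{l1}; this consistency is exactly what the closed formulas of Theorem \ref{T1} and Corollary \ref{T2}, proved in Section \ref{HF}, ultimately confirm.
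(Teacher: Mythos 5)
Your proposal is correct and follows essentially the same route as the paper: the paper gives no separate proof of the proposition, its content being exactly the output of the Sturm-bound/kernel algorithm of Section \ref{sec_poly} applied to each pair $(N,m)$ of Table \ref{t=45678}, which is what you describe. Your explicit argument for $d<2$ (constants cannot vanish on a nonempty curve, and a vanishing linear form would contradict linear independence of the basis $f_0,\dots,f_{t-1}$) is a small addition that the paper leaves as a bare assertion; only note that the space of coefficient vectors is the left kernel of $A$ (equivalently the right kernel of its transpose, as the paper says), which is consistent with your dimension formula $d'-\operatorname{rank}A$.
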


\begin{table}[ht]
	\caption{Number of polynomials for $4\leq t\leq 7$ and $2\leq d\leq 7$}
	\begin{center}
		\begin{tabular}{|r||c||c|c|c|c|c|c|}
			\hline
			\multirow{2}{*}{t}&
			
			\multirow{2}{*}{g}&
			\multicolumn{6}{c|}{degree d of $P$} \\
			%\hline
			&	& 2& 3 & 4 & 5 & 6&7 \\
			\hline
			\hline
						
			\multirow{3}{*}{4}&	0& 3 & 10 & 22 & 40 & 65 & 98 \\
			%\cline{2-8}
			
			&1 & 2 & 8  & 19 & 36 & 60 & 92 \\
		%%	\cline{2-8}
			
			&4& 1 & 5 &14 & 29 & 51 & 81 \\
			\hline%\hline
			\multirow{3}{*}{5} &	0& 6 & 22 & 53 & 105 & 185 & 301 \\
			%\cline{2-8}
			
			&	2 & 4 & 18  & 47 & 97 & 175 & 289 \\
			%\cline{2-8}
			
			&	5& 3 & 15 &42 & 90 & 166 & 278 \\
			\hline%\hline
			\multirow{3}{*}{6}&	0& 10 & 40 & 105 & 226 & 431 & 756 \\
			%\cline{2-8}
			
			&	1 & 9 & 38  & 102 & 222 & 426 & 750 \\
			%\cline{2-8}
			
			&	6& 6 &31 &91 & 207 & 407 & 727 \\
			\hline	%\hline
			\multirow{3}{*}{7}
			
			& 0& 15 & 65 & 185 & 431 & 887 & 1673 \\
			%\cline{2-8}
			
			&		2 & 13 & 61  & 179 & 423 & 877 & 1661 \\
			%\cline{2-8}
			
			&	7& 10 &54 &168 & 408 & 858 & 1638 \\
			
			\hline%\hline
			\multirow{3}{*}{8}
			&	0& 21 & 98 & 301 & 756 & 1673 & 3382 \\
			%\cline{2-8}
			
			&	1 & 20 & 96  & 298 & 752 & 1668 & 3376 \\
			%\cline{2-8}
			
			&	8& 15 &85 &9281 & 729 & 1639 & 3341 \\
			
			\hline
		\end{tabular}
	\end{center}
	\label{tablica_brojevi}
\end{table}

\section{General formula and a special triangle of numbers}
\label{sec_formula}
In this section we will find the recursion and the general formula that govern the numbers from Table \ref{tablica_brojevi}. We will do this for the case $g=0$ and then for higher $g$ we will show the relation to $g=0$ case. The first $7$ numbers for $3\leq t\leq 8$ in this series for $g=0$ cases are given in Table \ref{0genusi}. We omit the first row and column of zeros for $t=2$ and $d=1$.

\begin{table}[ht]
	\caption{Number of polynomials for $g=0$}
	\begin{center}
		\begin{tabular}{|c||c|c|c|c|c|c|c|}
			\hline
			
			\multirow{2}{*}{$t$}&
			\multicolumn{6}{c|}{$d$} \\
			%\hline
			& 2& 3 & 4 & 5 & 6&7 \\
			\hline
			\hline
			3 & 1 &3 &6 &10  &15  &21 \\
			\hline
			4 & 3 & 10 & 22 &40  &65  &98 \\
			\hline
			5 & 6 & 22 & 53 & 105 & 185 &301  \\
			\hline  
			6 & 10 & 40 & 105 & 226 & 431& 756 \\
			
			\hline
			7 & 15 & 65 & 185 & 431 &887 &1673  \\
			\hline
			8 & 21 & 98 & 301 & 756 &1673 & 3382 \\
			\hline
			
		\end{tabular}
	\end{center}
	\label{0genusi}
\end{table}

In Table \ref{0genusi} we see notice the following rule: \textit{a particular number $N(t,d)$ can be calculated from previous elements if we add the number in previous row, number in previous column and add $(t-2)(d-1)$ }. 

This can be illustrated as an extended triangle shown in Figure \ref{trokut}, where we add between numbers from Table \ref{0genusi} as antidiagonals and diagonals all multiples of all numbers. For instance, in figure \ref{trokut} we see $105=40+12+53$.

This adding rule can be written as a non-homogeneous recursion
\begin{equation}\label{rekurzija}
N(t,d)=N(t-1,d)+N(t,d-1) +(t-2)(d-1).
\end{equation}

\begin{figure}\label{trokut}
\begin{tabular}{ccccccccccccccccccccc}
	&& &    &    &    &    & &0&\\
&& &    &    &    &    &  0&\textbf{1}&0\\	
&& &    &  &  &  0  &  \textbf{2}  &  1&\textbf{2}&0\\
&& &   & &   0 &  \textbf{3}  &  3 &  \textbf{4}  &  3&\textbf{3}&0\\
&& & & 0  &  \textbf{4}  & 6 &   \textbf{6} &  10 & \textbf{ 6 } &  6&\textbf{4}&0\\
& &&0& \textbf{5}   &  10 &  \textbf{8}  &  22 & \textbf{9}   &  22 &  \textbf{8}  &  10&\textbf{5}&0\\
&&0 &\textbf{6}&  15 &  \textbf{10}  &  40 &  \textbf{12 } &  53 &  \textbf{12}  &  40 &  \textbf{10}  &  15&\textbf{6}&0\\
&0&\textbf{7}& 21& \textbf{12}  &  65  & \textbf{15} & 105   & \textbf{16}  & 105   & \textbf{15}  & 65   & \textbf{15} &  21&\textbf{7}&0\\
\end{tabular}
\caption{Extended triangle from Table \ref{0genusi}}
\end{figure}

For the homogeneous part of recursion $(\ref{rekurzija})$, that is also defining the Pascal triangle, $$N_h(t,d)=N_h(t-1,d)+N_h(t,d-1) $$ we can find the generating function \cite{Wilf}
$$
N_t(x)=\sum_{k\geq 0} N_h(t,d)x^k =N_{t-1}(x)+xN_t(x) \Rightarrow$$
$$N_t(x)=\dfrac{1}{1-x}N_{t-1}(x)=\dfrac{1}{(1-x)^t}$$
	
where $N_h(t,d)$ is the coefficient of $x^t$ in $N_t(x)$ is $$N_h(t,d)=\binom{t+d-1}{d},$$

precisely the size $|I|$ of set of homogeneous monomials of degree $d$, $(\ref{d'})$.

We continue by finding a particular solution in the form:
\begin{align*}
N(t,d)_p&=(C_1t+C_2)(D_1d+D_2)+E\\
&=C_1D_1td+C_1D_2t+D_1C_2d+C_2D_2+E
\end{align*}

and after substitution in $(\ref{rekurzija})$ we get
$$C_1=-1,\,C_2=1,\,D_1=1,\,D_2=0,\, E=-1. $$

\begin{lemma}\label{formula_rek}
Numbers in Table \ref{0genusi} satisfy the formula:
\begin{equation} \label{fr}
N(t,d)=\binom{t+d-1}{d} -(t-1)d-1
\end{equation}

\end{lemma}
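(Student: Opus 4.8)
The plan is to establish the closed form (\ref{fr}) by induction, using the recursion (\ref{rekurzija}) as the inductive mechanism and exploiting the fact that its right-hand side already separates the answer into a binomial (homogeneous) term $\binom{t+d-1}{d}$ and a linear (particular) term $-(t-1)d-1$. Concretely, I would induct over the quarter-plane $t\ge 2$, $d\ge 0$, taking as base cases the edges on which the recursion cannot reduce both indices, and then verify the interior step by matching the binomial and linear contributions separately. Since the excerpt has already produced the generating function $N_t(x)=(1-x)^{-t}$ for the homogeneous part and fixed the particular-solution constants ($C_1=-1$, $C_2=1$, $D_1=1$, $D_2=0$, $E=-1$), what remains is to pin down the boundary data and assemble the induction.

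First I would fix the boundary values. A degree-$0$ homogeneous polynomial is a nonzero constant and cannot vanish, while a degree-$1$ homogeneous relation would force a linear dependence among the basis $f_0,\dots,f_{t-1}$; hence $N(t,0)=N(t,1)=0$, in agreement with the leading zeros observed in Proposition \ref{prop3}. Moreover $N(2,d)=0$ for all $d$ by Proposition \ref{prop1}. I would then check that the formula reproduces these edges: at $d=0$ it gives $\binom{t-1}{0}-1=0$, at $d=1$ it gives $\binom{t}{1}-(t-1)-1=0$, and at $t=2$ it gives $\binom{d+1}{d}-d-1=(d+1)-d-1=0$. With the bases secured, for $t\ge 3$, $d\ge 2$ I would assume (\ref{fr}) at $(t-1,d)$ and $(t,d-1)$ and substitute into (\ref{rekurzija}). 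The binomial parts combine by Pascal's rule, $\binom{t+d-1}{d}=\binom{t+d-2}{d}+\binom{t+d-2}{d-1}$, after rewriting $\binom{t+d-2}{d}=\binom{(t-1)+d-1}{d}$ and $\binom{t+d-2}{d-1}=\binom{t+(d-1)-1}{d-1}$ to identify them with the binomial parts of $N(t-1,d)$ and $N(t,d-1)$. The linear parts, together with the inhomogeneous term $(t-2)(d-1)$, reduce to the single identity $-(t-2)d-(t-1)(d-1)+(t-2)(d-1)-2=-(t-1)d-1$, which is exactly the constant-matching already recorded in the excerpt. Adding the two contributions yields (\ref{fr}) at $(t,d)$, closing the induction.

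The only genuine subtlety is conceptual rather than computational: the recursion (\ref{rekurzija}) by itself does not determine $N(t,d)$, so the argument rests entirely on supplying it with the correct boundary values. I would therefore make explicit that (\ref{rekurzija}) propagates the zero data on the edges $t=2$ and $d\le 1$ into the whole interior, so that matching the formula on those edges together with the inductive step forces it everywhere by uniqueness. Apart from this, the proof is the routine algebra of Pascal's identity and one linear identity, both sketched above.
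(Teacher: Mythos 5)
Your proof is correct, and it rests on the same foundation as the paper's --- the recursion $N(t,d)=N(t-1,d)+N(t,d-1)+(t-2)(d-1)$ --- but the execution is genuinely different. The paper \emph{derives} the formula by the classical method for non-homogeneous linear recurrences: it solves the homogeneous part via the generating function $N_t(x)=(1-x)^{-t}$, obtaining the binomial term, and then fits a particular solution by the ansatz $(C_1t+C_2)(D_1d+D_2)+E$, obtaining $-(t-1)d-1$; the two pieces are added and the result is checked against the table. You instead \emph{verify} the closed form by induction over the quarter-plane, combining the binomial parts by Pascal's rule and the linear parts by the one-line identity $-(t-2)d-(t-1)(d-1)+(t-2)(d-1)-2=-(t-1)d-1$, with explicit base cases on the edges $d\le 1$ and $t=2$. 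What your route buys is precisely the rigor the paper leaves implicit: a solution of the recursion is only determined once boundary data is specified (the homogeneous recursion has many solutions, and the paper never says why $\binom{t+d-1}{d}$ is the right one), and you close that gap by checking that the formula vanishes on the edges --- justified intrinsically for $d=0,1$ by linear independence of the basis forms, and by Proposition \ref{prop1} for $t=2$ --- and by noting that the recursion propagates this boundary data uniquely into the interior. What the paper's route buys is discovery: the generating-function-plus-particular-solution calculation explains how one would find the formula in the first place, whereas an induction can only confirm a formula already guessed. One caveat applies equally to both arguments: the recursion itself is an empirical observation read off from Table \ref{0genusi}, so both proofs establish the formula only for the computed range of the table (where Proposition \ref{prop1} also covers the $t=2$ edge only up to $d=7$), not for the modular-forms quantities $N(t,d)$ in general; that stronger statement is what Section \ref{HF} addresses by other means.
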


\begin{remark}
Antidiagonals of our Table \ref{0genusi}, read as series $1,\, 3, 3,\, 6, 10, 6, ....$ can be found in \textit{The on-line encyclopedia of integer sequences} OEIS, \cite{oeis}, database as sequence A124326 with the description  \begin{equation}\label{PRas}
T(n,m)=A007318(n,m)-A077028(n,m)
\end{equation}
 as a difference between the Pascal triangle, listed in OEIS as A007318, with defining formula $C(n,m)=\binom{n}{m},\ 0\leq m\leq n$ and the Rascal triangle, \cite{rascal}, A007028 in OEIS, $T(n,m)=m(n-m)+1,\ n\geq 0,\ 0\leq m\leq n$ 
\begin{equation}\label{Pas}
T(n,m)=\binom{n}{m}-m(n-m)-1
\end{equation}
If we substitute $n=t+d-1$ and $m=d$ in $(\ref{Pas})$ we get formula $(\ref{fr})$ from Lemma \ref{formula_rek}.

Alternatively, we can also say that rows of our Table \ref{0genusi} are the diagonals of the triangle A124326, see Figure 1.

\end{remark}

\begin{remark}
The numbers in Table \ref{0genusi} are generated via modular forms but according to the rule in Section \ref{sec_formula} they can be constructed in another way from Figure \ref{trokut} by relating it to sequence A003991 from OEIS database, which is the table of multiplication read by antidiagonals defined by $T(n,m)=nm,\ n,m \geq 1$ and the Pascal rule.

For $n,m\geq 0$ and setting the initial values $T(n,0)=T(0,m)=0$, the not highlighted numbers of triangle in Figure \ref{trokut} satisfy the recursion and formula
\begin{align*}
T(n,m)=T(n-1,m)+T(n,m-1)+nm\\
T(n,m)=\binom{n+m+2}{m+1} -(n+1)(m+1)-1.
\end{align*}

Again by substituting $\overline{n}=n+m+2$
 and $\overline{m}=m+1$ we obtain \ref{PRas} which is the description of the sequence A124326 in OEIS database.

This way of obtaining the sequence A124326 appears to be new.	

\end{remark}

%\begin{theorem}\label{num_formula}
%	Let $N\geq 2$ be such that genus of $\Gamma_0(N)$ is zero or $X_0(N)$ is hyperelliptic. Let $t=\dim S_m(\Gamma_0(N))\geq 3$ and let $2\leq d\geq 7$.
%	Then the number of polynomials $N(t,d)$ from Table \ref{0genusi} satisfy the recursion
%	$$N(t,d)=N(t,d-1)+N(t-1,d)+(t-1)(d-1)$$
%%	with the initial values $N(0,d)=N(t,0)=0$ with the following formula 
%	\begin{equation}\label{formula}
%	N(t,d)=\binom{t+d-1}{d}-(t-1)d-1.
	%\end{equation} 

\section{Hilbert function and Hilbert polynomial}
\label{HF}

In Section \ref{intro} we have seen that homogeneous polynomials of degree $d$ vanishing on the curve $\mathcal{C}(N,m)$ make the vector space 
\begin{equation*}
I(\mathcal{C}(N,m))_d= \mathcal{P}_d \cap I(\mathcal{C}(N,m))
\end{equation*}

and generate the graded ideal
	$$I(\mathcal{C}(N,m))=\bigoplus_{d\geq 0} I(\mathcal{C}(N,m))_d.$$

From Lemma \ref{formula_rek} we have: 

\begin{corollary}
	Let $2\leq d\leq 7$. Then $I(\mathcal{C}(N,m))_d$ is a vector space of dimension
\begin{equation}\label{dimenzija}
\dim I(\mathcal{C}(N,m))_d=\binom{t+d-1}{d}-(t-1)d-1.	
\end{equation}
\end{corollary}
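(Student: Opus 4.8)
The plan is to reduce the statement to the combinatorial identity of Lemma~\ref{formula_rek} by identifying $\dim I(\mathcal{C}(N,m))_d$ with the count $N(t,d)$ of linearly independent vanishing polynomials produced by the algorithm of Section~\ref{sec_poly}. No new geometry is needed beyond recognising that these two quantities are literally the same number.

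First I would make the bridge between the ideal and the computation explicit. Since $\mathcal{C}(N,m)$ is the image of the map~(\ref{map}), a homogeneous polynomial $P\in\mathcal{P}_d$ lies in $I(\mathcal{C}(N,m))_d$ precisely when it vanishes at every point $(f_0(z):\cdots:f_{t-1}(z))$, that is, when the weight-$md$ modular form $P(f_0,\dots,f_{t-1})$ is identically zero. Hence $I(\mathcal{C}(N,m))_d$ is exactly the kernel of the linear evaluation map $\Phi_d\colon\mathcal{P}_d\to S_{md}(\Gamma_0(N))$, $P\mapsto P(f_0,\dots,f_{t-1})$. By the Sturm bound~(\ref{sturm}) such a form vanishes as soon as its first $B_{md}$ Fourier coefficients do, so $\ker\Phi_d$ is cut out by the finite homogeneous system $p_0=\cdots=p_{B_{md}}=0$; its solution space is the right kernel computed by the algorithm, of dimension $N(t,d)$. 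This establishes $\dim I(\mathcal{C}(N,m))_d = N(t,d)$.

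The second and final step is simply to invoke Lemma~\ref{formula_rek}: for the genus-zero entries tabulated in Table~\ref{0genusi}, and by Table~\ref{3formeirreduc.} equally in the hyperelliptic $g=t$ case covered by Theorem~\ref{T1}, these numbers satisfy $N(t,d)=\binom{t+d-1}{d}-(t-1)d-1$. Substituting yields~(\ref{dimenzija}) throughout the tabulated range.

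The main point to watch is scope rather than depth: Lemma~\ref{formula_rek} has been verified against the computed table only for $2\le d\le 7$, which is exactly why the corollary carries that range restriction. Pushing the dimension formula to all $d$ cannot come from the recursion, since it only propagates values already known; instead one must show that the Hilbert function $H_{\mathcal{C}(N,m)}(d)=\binom{t+d-1}{d}-\dim I(\mathcal{C}(N,m))_d$ agrees with the Hilbert polynomial of the curve. The degree and genus supplied by Lemma~\ref{l1} (degree $t-1$, genus $0$ in the $g=0$ case, so that $\mathcal{C}(N,m)$ is a nondegenerate curve of minimal degree, hence a rational normal curve) force that polynomial to be $(t-1)d+1$, and the curve's projective normality makes the Hilbert function coincide with it for every $d\ge 0$. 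Carrying out that identification is the task of the remainder of Section~\ref{HF}, and it is the genuine obstacle lying behind the present corollary.
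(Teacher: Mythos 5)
Your proposal is correct and takes essentially the same route as the paper: the paper also obtains this corollary by identifying $\dim I(\mathcal{C}(N,m))_d$ with the kernel dimension $N(t,d)$ computed by the algorithm of Section \ref{sec_poly} (an identification already built into the definition of $I(\mathcal{C}(N,m))_d$ as the polynomials with $P(f_0,\dots,f_{t-1})=0$) and then invoking Lemma \ref{formula_rek}, with the range $2\leq d\leq 7$ reflecting exactly the computationally verified data. Your closing observation---that extending the formula to all $d$ requires matching the Hilbert function with the Hilbert polynomial rather than iterating the recursion---is also how the paper proceeds, but that is the content of the proof of Theorem \ref{T1}, not of this corollary.
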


For $d\geq 0$ we define the Hilbert function \cite{cocoa2} of the curve $\mathcal{C}_{N,m}$ to be the Hilbert function of its coordinate ring:
\begin{equation}\label{Hil_fun}
HF_{\mathcal{C}_{N,m}}(d)=HF_{\mathcal{P}/I(\mathcal{C}_{N,m})}(d)=\dim \mathcal{P}_d - \dim I_d
\end{equation}

For the polynomial ring $\mathcal{P}$ we have 
\begin{equation}\label{Hil_fun_P}
HF_{\mathcal{P}}(d)=\dim \mathcal{P}_d=\binom{t+d-1}{d}
\end{equation}

By Hilbert-Serre theorem (see \cite{hartshorne}, Thm. 7-5) for a projective curve there is a unique polynomial of degree one such that for $d\gg 0$ $$HP_{\mathcal{C}_{N,m}}(d)=HF_{\mathcal{C}_{N,m}}(d).$$

We have calculated $\dim I_d$ up to $d=7$, but we now use formulas from Section \ref{sec_formula} to find this polynomial explicitly.

\begin{proof}[ Proof of Theorem \ref{T1}]

In the case of genus $0$ or a hyperelliptic $X_0(N)$ from $(\ref{Hil_fun})$ , $(\ref{Hil_fun_P})$ and $\ref{dimenzija}$ we have:
\begin{align}
HF_{I(C)}(d)&=\binom{t+d-1}{d} - \binom{t+d-1}{d}+(t-1)d+1 \nonumber\\
&=(t-1)d+1 \label{Hil_fun1}
\end{align}	

From \ref{Hil_fun1} we can see that the Hilbert function is expressed as the Hilbert polynomial from the index $d\geq 2$ and the degree of the polynomial is equal to one, as is always in the case of curves.
%This minimal index where Hilbert function and polynomial coincide in values is the inital degree of the module (since $I(\mathcal{C}_{N,m})$ has no polynomials of degree less then $2$). 
%This also means that our module is $3$- regular.
 
From this, we can conclude that this will be the Hilbert function for all $d$ since, hence Theorem \ref{T1} follows from here. 
	
\end{proof}

The coefficients and the degree of this linear Hilbert polynomial for our $\mathcal{C}(N,m)$ as an embedded projective variety have following interpretation, \cite{cocoa2}:
\begin{itemize}
\item $\dim \mathcal{C}(N,m)=1$ is the degree of the Hilbert polynomial 
\item the degree of $\mathcal{C}(N,m)$ is the leading coefficient of the Hilbert polynomial.
\end{itemize}

\begin{example}
For $t=3$ we have irreducible equations for projective curves in $\mathbb{P}^2$, as we see in Proposition \ref{prop2} and the Hilbert polynomial is $$HP_{\mathcal{C}(N,m)}(d)=2d+1$$ for genus $0$ curves and hyperelliptic genus $3$ curves so we see that the degree is $2$ and for nonhyperelliptic curves of genus $3$ we have $$HP_{\mathcal{C}(N,m)}(d)=2d+1+1+(d-2)2=4d-2$$ and we see that the degree of the curve is $4$. 
\end{example}

\begin{corollary}
The degree of the curve $\mathcal{C}(N,m)$ is:
\begin{equation*}\label{deg}
\deg\mathcal{C}(N,m)=\begin{cases}
t-1& :  g(X_0(N))=0 \text{ or } g(X_0(N))=t\\
& \text{ and } X_0(N) \text{ is hyperelliptic},\\
t+g-1& : g(X_0(N))\in\left\lbrace 1,2  \right\rbrace, \\
t+g-2& : g(X_0(N))\ge 2.
\end{cases} 
\end{equation*} 	
	
\end{corollary}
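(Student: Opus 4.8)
The plan is to read the degree straight off the Hilbert polynomial, exploiting the interpretation recorded just above the statement: for the embedded projective curve $\mathcal{C}(N,m)$ the Hilbert polynomial is linear, its degree records $\dim\mathcal{C}(N,m)=1$, and its leading coefficient records $\deg\mathcal{C}(N,m)$. Hence it suffices to write down $HP_{\mathcal{C}(N,m)}(d)$ in each of the three regimes and extract the coefficient of $d$. Since $HP_{\mathcal{C}(N,m)}(d)=HF_{\mathcal{P}}(d)-\dim I(\mathcal{C}(N,m))_d$, I would simply substitute the counting formulas already established for $\dim I(\mathcal{C}(N,m))_d$ into \eqref{Hil_fun} and simplify.

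In the first regime ($g=0$, or $g=t$ with $X_0(N)$ hyperelliptic) this computation is already carried out: equation \eqref{Hil_fun1} gives $HP_{\mathcal{C}(N,m)}(d)=(t-1)d+1$, whose leading coefficient is $t-1$. For the remaining two regimes I would feed the two formulas of Corollary \ref{T2} into $HF_{\mathcal{P}}(d)-\dim I(\mathcal{C}(N,m))_d$. For $g\in\{1,2\}$ the subtracted correction collapses as $g+(d-2)g=g(d-1)$, so
\[
HP_{\mathcal{C}(N,m)}(d)=(t-1)d+1+g(d-1)=(t+g-1)d+(1-g),
\]
with leading coefficient $t+g-1$. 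For $g>2$ the correction collapses as $(g-2)+(d-2)(g-1)=(g-1)d-g$, giving
\[
HP_{\mathcal{C}(N,m)}(d)=(t-1)d+1+(g-1)d-g=(t+g-2)d+(1-g),
\]
with leading coefficient $t+g-2$. Reading off the three leading coefficients reproduces exactly the three cases of the statement.

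The one genuine gap I must close is that the counting formulas of Theorem \ref{T1} and Corollary \ref{T2} were verified only for $2\le d\le 7$, whereas the Hilbert polynomial is governed by the behaviour for $d\gg 0$. The Hilbert--Serre theorem guarantees that $HF_{\mathcal{C}(N,m)}$ agrees with a polynomial of degree one for all large $d$; a degree-one polynomial is pinned down by two values, and my linear expressions already match the Hilbert function at the six points $d=2,\dots,7$, so they will be the Hilbert polynomial provided the agreement persists. I would secure this last point --- the real obstacle --- by cross-checking the leading coefficients against the purely geometric degree computations of Lemma \ref{l1}: part ii) gives degree $t+g-1$ when $m\ge 4$ (covering $g=0$, where $t+g-1=t-1$, as well as $g\in\{1,2\}$), part iv) gives degree $g-1=t-1$ in the hyperelliptic canonical case $g=t$, and part iii) gives the canonical degree $2g-2=t+g-2$ in the non-hyperelliptic case $g=t$. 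The independent agreement of these geometric degrees with the leading coefficients extracted above confirms that the linear formulas are indeed the Hilbert polynomial for all $d$, and completes the proof.
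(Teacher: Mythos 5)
Your proposal is correct and follows essentially the same route as the paper: the paper, too, reads the degree off as the leading coefficient of the Hilbert polynomial, obtained by substituting the counting formulas of Theorem \ref{T1} and Corollary \ref{T2} into $HF_{\mathcal{C}(N,m)}(d)=\dim\mathcal{P}_d-\dim I(\mathcal{C}(N,m))_d$ (yielding $(t-1)d+1$, $(t+g-1)d+1-g$ and $(t+g-2)d+1-g$), and then remarks that the result agrees with Lemma \ref{l1}. Your closing step --- invoking Lemma \ref{l1} to certify that linear expressions verified only for $2\le d\le 7$ really are the Hilbert polynomials for $d\gg 0$ --- is a more careful handling of a gap the paper passes over silently in its proof of Theorem \ref{T1}, and you also (correctly) read the third case as the canonical non-hyperelliptic $g=t$ situation, which is the only way it is consistent with Lemma \ref{l1}.
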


We see that this agrees with Lemma \ref{l1} from Section \ref{intro}.

Acknowledgements. This work is supported (in part) by the Croatian Science Foundation under the project number HRZZ-IP-2022-10-4615.

\end{document}